\numberwithin{equation}{section} 
\theoremstyle{plain} 
\newtheorem{thm}{Theorem}[section] 
\newtheorem{cor}[thm]{Corollary} 
\newtheorem{theorem}[thm]{Theorem} 
\newtheorem{lem}[thm]{Lemma}
\newtheorem{fact}[thm]{Fact}
\theoremstyle{definition} 
\newtheorem{defin}[thm]{Definition} 
\newtheorem{usefulfacts}[thm]{Useful Facts}
\newtheorem{rem}[thm]{Remark} 
\newtheorem{example}[thm]{Example}
\newtheorem{remark}[thm]{Remark}
\def\Im{\protect\operatorname{Im}} 
\def\diam{\protect\operatorname{diam}}
\def\pr{\protect\operatorname{pr}}
\def\pr{\protect\operatorname{pr}}
\def\exp{\protect\operatorname{exp}}
\def\Q{{\mathbb Q}} 
\def\R{{\mathbb R}} 
\def\N{{\mathbb N}}
\def\1{\hbox{\rm\rlap {1}\hskip.03in{\rom I}}} 
\def\Bbbone{{\rm1\mathchoice{\kern-0.25em}{\kern-0.25em} 
{\kern-0.2em}{\kern-0.2em}I}} 
\def\wt{\widetilde}
\begin{document}
\title[Manifolds admitting a $Y^x$-Riemannian metric] 
{Topological Properties of Manifolds admitting a $Y^x$-Riemannian metric} 
\author[V.~Chernov, P.~Kinlaw, R.~Sadykov]{Vladimir Chernov, Paul Kinlaw, Rustam Sadykov}
\address{V.~Chernov, Mathematics Department, 6188 Kemeny,  Dartmouth College, Hanover NH 03755, 
USA} 
\email{Vladimir.Chernov@dartmouth.edu} 
\address{P.~Kinlaw, Mathematics Department, 6188 Kemeny,  Dartmouth College, Hanover NH 03755, 
USA} 
\email{Paul.Kinlaw@dartmouth.edu} 
\address{R.~Sadykov, Department of Mathematics, University of Toronto, Bahen Centre, 40 St.~George St., 
Toronto, Ontario M5S 2E4,
CANADA } 
\email{sadykov@math.toronto.edu}

\subjclass{Primary 53C24; Secondary 53C50, 57D15, 83C75}

\begin{abstract} A complete Riemannian manifold $(M, g)$ is a $Y^x_l$-manifold
if every unit speed geodesic $\gamma(t)$ originating at $\gamma(0)=x\in M$ 
satisfies $\gamma(l)=x$ for $0\neq l\in \R$. B\'erard-Bergery proved that 
if $(M^m,g), m>1$ is a $Y^x_l$-manifold, then $M$ is a closed manifold with finite fundamental group, and the cohomology ring $H^*(M, \Q)$ is generated by one element. 

We say that $(M,g)$ is a $Y^x$-manifold if for every $\epsilon >0$ there exists $l>\epsilon$ such that for every unit speed
geodesic $\gamma(t)$ originating at $x$, 
the point $\gamma(l)$ is $\epsilon$-close to $x$. 
We use Low's notion of refocussing Lorentzian space-times to show that 
if $(M^m, g), m>1$ is a $Y^x$-manifold, then $M$ is a closed manifold 
with finite fundamental group. As a corollary we get that a Riemannian covering of a $Y^x$-manifold is a 
$Y^x$-manifold. Another corollary is that if $(M^m,g), m=2,3$ is a 
$Y^x$-manifold, then $(M, h)$ is a $Y^x_l$-manifold for some metric $h.$ 
\end{abstract}

\keywords{closed geodesics, refocussing, globally hyperbolic space time}

\maketitle

\section{Introduction}
Throughout the paper all Riemannian manifolds are assumed to be geodesically complete, 
while Lorentzian manifolds (see Section~\ref{LorentzGeometry}) are not assumed to be complete unless this is explicitly stated.
We will tacitly assume that a manifold $M$ under consideration is a smooth connected manifold without boundary (not necessarily compact or oriented). 

\begin{defin}[$Y^x_l$-manifolds]
Let $(M,g)$ be a Riemannian manifold, $x$ a point in $M$ and $l$ a nonzero real number. 
We say that $(M,g)$ is 
a {\it $Y^x_l$-manifold\/} if 
for every geodesic $\gamma:\R\to M$ satisfying $\gamma(0)=x$ and $|\dot \gamma (0)|=1$ we have 
$\gamma(l)=x.$ 
\end{defin}

In other words $(M, g)$ is a $Y^x_l$-manifold if each geodesic parametrized by arc length and emitted from 
$x$ comes back to $x$ at the moment $l$. Such manifolds attracted a lot of attention~\cite{Besse}. They are 
related to Blaschke manifolds and manifolds all of whose geodesics are closed.  The following 
weak form of the Bott-Samelson Theorem~\cite{Bott, Samelson} 
was proved by B\'erard-Bergery, see~\cite{BerardBergery}, 
~\cite[Theorem 7.37, page 192]{Besse}.

\begin{theorem}[B\'erard-Bergery]\label{BB1}
If $(M,g)$ is a $Y^x_l$-Riemannian manifold of dimension at least $2$, then $M$ is a closed manifold with 
finite fundamental group and the cohomology ring 
$H^*(M,\Q)$ is generated by one element.
\end{theorem}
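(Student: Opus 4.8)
The plan is to establish the three assertions (closedness, finiteness of $\pi_1$, and the structure of $H^*(M,\Q)$) in that order, isolating the genuinely hard input at the end.

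First I would show that $M$ is closed. By geodesic completeness and Hopf--Rinow every $p\in M$ is joined to $x$ by a minimizing unit-speed geodesic $\gamma$ of length $d=d(x,p)$. If $d>l$, then since $\gamma(l)=x$ the arc $\gamma|_{[l,d]}$ is a path from $x$ to $p$ of length $d-l<d$, contradicting minimality; hence $d\le l$. Thus $M=\exp_x(\{v\in T_xM:|v|\le l\})$ is the continuous image of a compact ball, so $M$ is compact, and being a complete manifold without boundary it is closed.

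Next, the finiteness of $\pi_1(M)$. Let $p\colon \wt M\to M$ be the universal cover with the pulled-back (complete) metric, and fix $\tilde x\in p^{-1}(x)$. Each unit geodesic $\tilde\gamma_v(t)=\exp_{\tilde x}(tv)$ covers a unit geodesic from $x$, so $\tilde\gamma_v(l)\in p^{-1}(x)$. The map $v\mapsto \tilde\gamma_v(l)$ from the connected unit sphere $U_{\tilde x}\wt M\cong S^{m-1}$ (here we use $m\ge 2$) to the discrete fibre $p^{-1}(x)$ is continuous, hence constant, equal to $g\tilde x$ for a single deck transformation $g$; that is, $\exp_{\tilde x}(lv)=g\tilde x$ for all unit $v$. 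Applying the isometry $g$ gives $\exp_{g\tilde x}(lu)=g^2\tilde x$ for all unit $u$ at $g\tilde x$. On the other hand, reversing any $\tilde\gamma_v$ produces a geodesic from $g\tilde x$ reaching $\tilde x$ at time $l$; running the same connectedness argument at the base point $g\tilde x$ shows that every geodesic from $g\tilde x$ refocuses at the single point $\tilde x$, so $\exp_{g\tilde x}(lu)=\tilde x$ for all $u$. Comparing the two identities yields $g^2\tilde x=\tilde x$, hence $g^2=\id$, and therefore $\exp_{\tilde x}(2lv)=\tilde x$ for all $v$, i.e. $\wt M$ is a $Y^{\tilde x}_{2l}$-manifold. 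By the first step $\wt M$ is compact, so the fibre $p^{-1}(x)$ is finite and $\pi_1(M)$ is finite.

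Finally, the cohomology. Since $\wt M$ is now a simply connected $Y^{\tilde x}_{2l}$-manifold, I would invoke the Morse theory of the based loop space $\Omega_{\tilde x}\wt M$ in the spirit of Bott--Samelson. The critical submanifolds of the energy functional are the spaces of iterated geodesic loops at $\tilde x$; the $Y^{\tilde x}_{2l}$-condition forces these to be copies of $S^{m-1}$ occurring at the discrete energy levels $(2lk)^2$ for $k\ge 1$. The heart of the argument is Bott's computation of the indices of these iterated geodesic loops, showing that the indices grow by a fixed amount and with a parity making the energy a perfect Morse--Bott function over $\Q$; the Poincar\'e series of $\Omega_{\tilde x}\wt M$ is then forced into the form associated to a compact rank-one symmetric space, and transporting this through the path--loop fibration gives $H^*(\wt M,\Q)\cong \Q[a]/(a^{n+1})$, generated by one element. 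To descend to $M$, I would use that $p$ is a finite cover, so $H^*(M,\Q)\cong H^*(\wt M,\Q)^{\pi_1}$; a graded ring automorphism of $\Q[a]/(a^{n+1})$ scales $a$ by a root of unity, so the invariants are spanned by the powers $a^{jr}$ for a single $r$ and hence are generated by the one element $a^r$. I expect the main obstacle to be precisely this last index computation: verifying that the iterated geodesic loops form clean (Morse--Bott nondegenerate) critical submanifolds and that their indices increase in the regular arithmetic pattern needed to turn the Morse--Bott inequalities into equalities over $\Q$. This is exactly the technical content of the Bott--Samelson theorem and of B\'erard-Bergery's refinement, and dealing with possibly degenerate or exceptional geodesic loops (first returns before time $2l$) is where the real care is required.
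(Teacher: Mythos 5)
A preliminary remark on scope: the paper does not prove Theorem~\ref{BB1} at all --- it quotes it from B\'erard-Bergery~\cite{BerardBergery}, \cite[Theorem 7.37]{Besse} --- so there is no internal proof to compare yours against. (The paper's Theorem~\ref{maintheorem} later reproves the first two conclusions for the larger class of $\widetilde Y^x$-manifolds, but by an entirely different, Lorentzian-geometric route via refocussing space-times.) Judged on its own, your argument for the first two conclusions is correct and complete, and is essentially the classical one: Hopf--Rinow plus the shortcut $\gamma|_{[l,d]}$ gives $d(x,p)\le l$ and hence compactness; and in the universal cover the continuous map $v\mapsto \exp_{\tilde x}(lv)$ from the connected sphere $ST_{\tilde x}\widetilde M$ (here $m\ge 2$ is used correctly) into the discrete fibre is constant, the reversed geodesics pin the common value of $\exp_{g\tilde x}(l\,\cdot)$ at $\tilde x$, so $\widetilde M$ is a $Y^{\tilde x}_{2l}$-manifold, hence compact, hence $\pi_1(M)$ is finite. (You should note once that one may take $l>0$, since $\gamma(-l)=x$ follows by reversing geodesics.)

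The third conclusion, that $H^*(M,\Q)$ is generated by one element, is not actually proved; your paragraph is a road map that defers exactly the steps that constitute the theorem. Two concrete gaps: (i) you assert that the critical submanifolds of the energy functional on $\Omega_{\tilde x}\widetilde M$ are copies of $S^{m-1}$ sitting only at the levels $(2lk)^2$, but a $Y^{\tilde x}_{2l}$-manifold may a priori have geodesic loops at $\tilde x$ of lengths that are not multiples of $2l$ (the set of return times is closed under adding $2l$ but need not be $2l\Z_{>0}$); controlling these exceptional loops is part of what B\'erard-Bergery has to do, and the paper's own Remark~\ref{questions} records how delicate nearby questions are; (ii) the Morse--Bott nondegeneracy of the critical sets and the arithmetic progression of their indices --- the facts that make the energy a perfect Morse--Bott function over $\Q$ and force the truncated-polynomial answer --- are precisely the content of the Bott--Samelson theorem, and you invoke them rather than establish them. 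The descent step $H^*(M,\Q)\cong H^*(\widetilde M,\Q)^{\pi_1(M)}$ and the analysis of finite-order graded automorphisms of $\Q[a]/(a^{n+1})$ are fine. So: your proof establishes closedness and finiteness of $\pi_1$, but for the cohomological statement it is an accurate outline with the essential technical core missing.
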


The standard metric on $S^1$ shows that the statement of Theorem~\ref{BB1} 
is false for one dimensional $(M, g).$

\begin{remark}\label{questions}
Besse~\cite[Definitions 7.7]{Besse} describes a few notions closely related to $Y^x_l$-manifolds.
In particular, $(M, g)$ is a {\it $Z^x$-manifold\/} if all the geodesics starting at $x$ come back to $x$. 
Clearly every $Y^x_l$-manifold is a  $Z^x$-manifold. However according to~\cite[Question 7.70]{Besse} it is not 
known if every $Z^x$-manifold is a $Y^x_l$-manifold for some nonzero
$l.$ Moreover it is not known if the length of the first 
return to $x$ of a unit speed geodesic starting at $x$ 
is uniformly bounded for every initial unit vector in $T_xM$,~see~\cite[Question 7.71]{Besse}. 
\end{remark}

In this work we introduce and study the class of $\widetilde Y^x$-manifolds 
that generalizes $Y^x_l$-manifolds.
Remark~\ref{questions} suggests that a priori even quite simple questions about manifolds satisfying 
conditions close to the definition of the $Y^x_l$-manifold 
can be hard to answer. We show however that all $\widetilde Y^x$-manifolds satisfy a counterpart of the 
B\'erard-Bergery Theorem (see Theorem~\ref{maintheorem}).

\section{Main results and definitions}

\begin{defin}[$Y^x$-manifolds]\label{def:Yx}
Let $(M,g)$ be a Riemannian manifold and $x$ a point in $M.$ We say that $(M,g)$ is 
a {\it $Y^x$-manifold\/} if there exists $\overline \epsilon>0$ such that for every positive
$\epsilon<\overline \epsilon$ there exists $l=l(\epsilon)$ with $l>\epsilon$ such that for every 
unit speed geodesic $\gamma:\R\to M$ originating at $x$ at time $0$, 
the point $\gamma(l)$ is $\epsilon$ close to $x$. 
\end{defin}

In other words, in a $Y^x$-manifold for every sufficiently small neighborhood of $x$
there exists a moment of time $l$ when all the unit speed geodesics emitted from $x$ return 
back to this neighborhood (after first leaving the neighborhood).

\begin{defin}[$\widetilde Y^x$-manifolds]\label{def:tildeYx}
Let $(M,g)$ be a Riemannian manifold and $x$ a point in $M.$ We say that $(M,g)$ is a 
{\it $\widetilde Y^x$-manifold\/} 
if there exists $\overline \epsilon>0$ such that for every positive
$\epsilon<\overline \epsilon$ there exist $l=l(\epsilon)$  
with $l>\epsilon$ and $y=y(\epsilon)$ 
such that for every unit speed 
geodesic $\gamma:\R\to M$ originating at $y$ at time $0$, 
the point $\gamma(l)$ is $\epsilon$ close to $x$. 
For short we shall say that {\it all geodesics from $y$ in time $l$ focus within $\epsilon$ from $x.$\/}
\end{defin}

It immediately follows that every $Y^x_l$-manifold is a $Y^x$-manifold, 
while every $Y^x$-manifold is a $\widetilde Y^x$-manifold.

\begin{rem}[possible reformulations of Definitions~\ref{def:Yx} and~\ref{def:tildeYx}]
Our Corollary~\ref{anyepsilon} says that 
if the requirements described in Definitions~\ref{def:Yx} and~\ref{def:tildeYx}
are satisfied for all sufficiently small $\epsilon>0$, then 
in fact they are satisfied for all $\epsilon>0.$ Thus in both definitions one can forget the condition that 
the requirement is supposed to be satisfied only for sufficiently small $\epsilon$ and this would not 
change the class of manifolds described in the definition.

A much easier fact is that the requirement that the condition should be 
satisfied for all sufficiently small $\epsilon>0$, can be substituted by the condition that there exists a 
sequence $\{\epsilon_n\}_{n=1}^{\infty}$ of positive numbers with $\lim_{n\to \infty}\epsilon_n=0$ for which 
the condition is satisfied. 

Indeed if the condition is true for all positive $\epsilon<\overline \epsilon$, 
then it is also satisfied for all the members of
any such sequence $\{\epsilon_n\}_{n=1}^{\infty}$ with all $\epsilon_n<\overline \epsilon.$
On the other side, if $r$ is the radius of a geodesically convex normal neighborhood of $x$, then for every 
positive $\epsilon<\frac{r}{2}$ for which the condition is satisfied 
the corresponding $l(\epsilon)>\frac{r}{2}>\epsilon.$ Without the loss of generality 
we can assume that all the sequence members are less than $\frac{r}{2}.$
Then one chooses $\overline \epsilon=\epsilon_1.$ Now given any positive $\epsilon<\overline \epsilon$ choose 
$K\in \N$  so that $\epsilon_K<\epsilon$ and observe that if we put
$l(\epsilon)=l(\epsilon_K)$ (and $y(\epsilon)=y(\epsilon_K)$ if we talk about $\widetilde Y^x$-manifolds), then
the desired condition is satisfied. 
\end{rem}

\begin{remark} 
We do not know examples of $\widetilde Y^x$-manifolds that are not
$Y^x$-manifolds. 
Similarly we do not know examples of $Y^x$-manifolds that are not $Y^x_l$-manifolds for some nonzero $l.$
It may be that these three classes actually coincide, but an attempt to prove this runs into a problem we 
describe below. 

Given a Riemannian $\widetilde Y^x$-manifold $(M,g)$ let 
$\{\epsilon _n\}_{n=1}^{\infty}$ be a positive sequence with $\lim_{n\to\infty}\epsilon_n=0$. Then there is a sequence
$\{l_n\}_{n=1}^{\infty}$ of positive numbers and a sequence $\{y_n\}_{n=1}^{\infty}$ of points in $M$ 
such that for every geodesic $\gamma:\R\to M$ 
parametrized by arc length and originating at $y_n$, the point $\gamma(l_n)$ is $\epsilon_n$ close to $x$. 

Our Lemma~\ref{l:2} says that if $(\widetilde y, \widetilde l)$ is a limit point of the set 
$\{(y_n, l_n)\}_{n=1}^{\infty}\subset M\times \R$, then $\widetilde l>0$ and 
$(M,g)$ is a $Y^x_{2\widetilde l}$-manifold. 

Even though our Theorem~\ref{maintheorem} says that $M$ has to be compact, it is not clear if one can always 
choose $l_n$ so that they form a bounded sequence, and hence it is 
not clear whether the subset $\{(y_n, l_n)\}_{n=1}^{\infty}\subset M\times \R$ necessarily has a limit point.
This difficulty seems to be similar to~\cite[Question 7.70]{Besse} discussed in Remark~\ref{questions}.
\end{remark}

Our main result is a counterpart of the B\'erard-Bergery Theorem.

\begin{theorem}\label{maintheorem}
Let $M$ be a manifold of dimension at least $2$ such that there exists a complete Riemannian metric $g$ on $M$ and a point 
$x\in M$ with the property that $(M^m, g)$ is a $\widetilde Y^x$-manifold. 
Then $M$ is a closed manifold and $|\pi_1(M)|<\infty.$
\end{theorem}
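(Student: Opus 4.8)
The plan is to connect the Riemannian $\widetilde Y^x$-condition to the refocussing property of a Lorentzian space-time, since the abstract announces precisely this bridge via Low's framework. I would build the space-time $N = M \times \R$ with the standard metric $g - dt^2$; its future-pointing null geodesics project to unit-speed geodesics on $(M,g)$, so null-geodesic focussing on $N$ is exactly the statement that these Riemannian geodesics reconverge. The $\widetilde Y^x$-condition, after passing to a limit point as encoded in Lemma~\ref{l:2}, should translate into the statement that a point $\widetilde p = (\widetilde y, 0)$ refocusses to the timelike curve $\{x\} \times \R$: every future null geodesic leaving $\widetilde p$ meets that curve. This is the key conceptual step, and it is where I would invoke the hypothesis that $(M,g)$ is a $\widetilde Y^x$-manifold together with geodesic completeness of $M$.

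Once refocussing is established, I would appeal to the topological consequences Low derived for refocussing space-times. The heuristic is that if every null geodesic through a point reconverges, then the space of such null geodesics (the sky of $\widetilde p$) is compact, and compactness of skies forces $M$ itself to be compact: an unbounded direction in $M$ would produce a null geodesic escaping to infinity and never refocussing, contradicting the hypothesis. So the first main conclusion, that $M$ is closed, follows from the compactness of $M$ (it is already a manifold without boundary) together with geodesic completeness. I expect the technical care here to lie in showing that the $\widetilde Y^x$-focussing at the single auxiliary point $y$ genuinely yields refocussing in Low's sense, rather than a weaker ``almost refocussing'' statement; bridging the $\epsilon$-approximate return in Definition~\ref{def:tildeYx} to an exact null-geodesic intersection is the crux.

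For the fundamental group statement, the natural route is to pass to the universal cover $\widetilde M$ with the pulled-back metric $\widetilde g$. A covering of a $\widetilde Y^x$-manifold should again enjoy a focussing property in each lift of the relevant data, so by the first part $\widetilde M$ is also compact. A compact universal cover of a compact manifold can only cover it finitely many times, whence $|\pi_1(M)| < \infty$; concretely, the fiber of $\widetilde M \to M$ is in bijection with $\pi_1(M)$ and is a discrete closed subset of the compact space $\widetilde M$, hence finite. The delicate point is verifying that the $\widetilde Y^x$-property descends to (or rather lifts to) the cover in a form strong enough to rerun the compactness argument, since the focussing base point $x$ has many preimages upstairs and one must argue that the refocussing phenomenon survives on $\widetilde M$.

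The step I expect to be the main obstacle is the translation of the approximate, limiting $\widetilde Y^x$-condition into exact Lorentzian refocussing. The manifold hypothesis only guarantees $\epsilon$-close returns from a varying auxiliary point $y(\epsilon)$, and as the authors themselves note in the remark preceding the theorem it is not clear that the associated parameters $l_n$ stay bounded or that $(y_n,l_n)$ has a limit point. Extracting honest refocussing without assuming such convergence — presumably by working directly with the causal structure on $N$ and using that $\epsilon$-returns for all small $\epsilon$ accumulate onto a genuine null geodesic reaching $\{x\}\times\R$ — is where the real work lies, and it is the reason Low's space-time formalism, rather than a direct Riemannian argument, is the right tool.
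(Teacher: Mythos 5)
Your high-level architecture matches the paper's: form the product space-time $(M\times\R,\,g\oplus -dt^2)$, show it is refocussing in Low's sense, and then quote Low's compactness result for the Cauchy surface and the Chernov--Rudyak finiteness of its fundamental group. But the central step --- actually establishing refocussing --- is left unresolved in your write-up, and the difficulty you flag as ``the crux'' (upgrading the $\epsilon$-approximate return to an exact null-geodesic intersection with the curve $\{x\}\times\R$) is not the step the argument needs. Low's \emph{weak} refocussing condition, which is all that his compactness theorem requires, only asks that for every small neighborhood $V$ of $(x,0)$ there is a point \emph{outside} $V$ all of whose null geodesics pass through $V$ --- through the open set, not through $x$. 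Since null geodesics of the product metric project to unit-speed Riemannian geodesics on $M$ and to unit-speed lines in $\R$, the $\widetilde Y^x$-condition hands you this verbatim: all null geodesics through $(y_N,-l_N)$ meet $B(x,\epsilon_N)\times 0\subset V$ once $\epsilon_N$ is small enough. There is no approximate-to-exact bridge to build, and no limit point of $(y_n,l_n)$ to extract; your proposed reduction ``after passing to a limit point as encoded in Lemma~\ref{l:2}'' founders exactly on the non-convergence issue you yourself acknowledge at the end.

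What you are missing is the dichotomy that makes the argument close. The only thing that can obstruct the weak-refocussing verification is the requirement that the auxiliary point lie \emph{outside} a fixed neighborhood $U$ of $(x,0)$; this can fail only if the points $(y_n,l_n)$ all stay in a bounded region, i.e.\ only if the sequence has a convergent subsequence. In that case one does not fight for refocussing at all: Lemma~\ref{l:2} shows the limit $(\widetilde y,\widetilde l)$ has $\widetilde l\neq 0$ and that $(M,g)$ is honestly a $Y^x_{2\widetilde l}$-manifold, so the B\'erard-Bergery Theorem~\ref{BB1} finishes that branch directly. Your treatment of the fundamental group (lift to the universal cover, rerun compactness, conclude the fiber is finite) is a workable alternative to the paper's direct citation of \cite[Theorem 15]{ChernovRudyak}, provided you prove the lifting of the $\widetilde Y^x$-property independently of Theorem~\ref{maintheorem} --- which the paper does, using connectedness of the unit tangent sphere in dimension at least $2$ --- but as written the first and principal half of your proof does not go through.
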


Since every $Y^x_l$-manifold is a $\widetilde{Y}^x$-manifold, Theorem~\ref{maintheorem} implies 
the first two out of three properties of $Y^x_l$-manifolds in the B\'erard-Bergery Theorem. On the other hand, 
the proof of Theorem~\ref{maintheorem} contained in Section~\ref{proofmaintheorem} is completely different 
from that of the B\'erard-Bergery Theorem. It is based on Lorentzian geometry and notion of refocussing 
Lorentzian space-times introduced by Low~\cite{LowNullgeodesics, LowRefocussing}.

For reader's convenience in Section~\ref{LorentzGeometry} we review necessary notions of Lorentz 
geometry. In Section~\ref{quest} we 
discuss facts related to refocussing and open problems.

\section{Corollaries of Theorem~\ref{maintheorem} and other results}
\begin{fact}\label{cooltechnical}
Let $M^m, m=2,3$ be a closed manifold with finite fundamental group, then there is a complete Riemannian metric 
$g$ on $M$ and a point $x\in M$ such that $(M,g)$ is a $Y^x_{2\pi}$-manifold. 
\end{fact}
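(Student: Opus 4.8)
The plan is to realize each such $M$ as a Riemannian quotient $S^m/\Gamma$ of a round sphere by a finite group $\Gamma$ acting freely by isometries, and to observe that any such quotient automatically carries a point $\bar x$ at which all unit speed geodesics close up at time $2\pi$. The second step is the easy one, so I would dispose of it first. Equip $S^m$ with the round metric of radius $1$, under which every unit speed geodesic is a great circle returning to its starting point exactly at time $2\pi$. If $\Gamma\subset O(m+1)$ is finite and acts freely, the quotient $M=S^m/\Gamma$ inherits a complete (indeed compact) metric for which the projection $\pi:S^m\to M$ is a Riemannian covering. Fix any $x\in S^m$ and set $\bar x=\pi(x)$. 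Given a unit speed geodesic $\gamma:\R\to M$ with $\gamma(0)=\bar x$, lift it to the unit speed geodesic $\tilde\gamma$ on $S^m$ with $\tilde\gamma(0)=x$; then $\tilde\gamma(2\pi)=x$, whence $\gamma(2\pi)=\pi(\tilde\gamma(2\pi))=\bar x$. Thus $(M,g)$ is a $Y^{\bar x}_{2\pi}$-manifold, and the entire problem is reduced to exhibiting the spherical quotient structure in each dimension.

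For $m=2$ I would appeal to the classification of closed surfaces: the only closed surfaces with finite fundamental group are $S^2=S^2/\{1\}$ and $\rpp=S^2/\{\pm 1\}$, where $-1\in O(3)$ is the (fixed-point-free) antipodal map. Both are round spherical space forms of the form treated in the first paragraph, so the claim follows at once in dimension $2$.

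For $m=3$ I would invoke the elliptization theorem, a consequence of Perelman's proof of the geometrization conjecture: every closed $3$-manifold with finite fundamental group is diffeomorphic to a spherical space form $S^3/\Gamma$ with $\Gamma\subset SO(4)$ acting freely and orthogonally. (One checks that a free action forces $\Gamma\subset SO(4)$, since every element of $O(4)$ of determinant $-1$ has a $+1$ eigenvalue and hence a fixed point on $S^3$.) Transporting the round metric through this diffeomorphism places $M$ in the situation of the first paragraph and completes the argument.

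The main obstacle is genuinely the $m=3$ case, where the full force of the argument rests on elliptization, equivalently on the fact that a free finite-group action on $S^3$ is conjugate to an orthogonal one. Merely knowing that the universal cover is $S^3$ (the Poincar\'e conjecture) does not by itself supply a metric in which all geodesics emanating from a single point return at time $2\pi$; one needs the deck action to be conjugate to a linear one so that the round metric descends to $M$. Once elliptization is granted, the remainder is the elementary geodesic-lifting computation of the first paragraph.
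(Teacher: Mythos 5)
Your proof is correct and follows essentially the same route as the paper: invoke Perelman's elliptization theorem to realize $M$ as a spherical space form $S^3/\Gamma$ in dimension $3$ (and the classification of surfaces in dimension $2$), then observe that the round quotient metric makes every point into a $Y^x_{2\pi}$ point via geodesic lifting. The paper states this more tersely, while you additionally spell out the lifting argument and the reduction to $SO(4)$, both of which check out.
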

\begin{proof}
By the Thurston Elliptization Conjecture~\cite{Thurston} 
proved by Perelman~\cite{Perelman1, Perelman2}, a closed manifold $M$ of dimension $3$ with finite fundamental group is a quotient of the standard unit sphere $S^3$ by a
finite group of isometries. Thus $M$ inherits the quotient metric $g$ from the standard metric on the unit sphere. Clearly $(M, g)$ is a $Y^x_{2\pi}$-manifold. The proof of Fact~\ref{cooltechnical} in the case $m=2$ is similar to (but simpler than) the proof in the case $m=3$.  
\end{proof}

\begin{cor}\label{maincorollary}
Let $M^m, m=2,3$ be a manifold, such that there exists a complete Riemannian metric $g$ on $M$ and a point 
$x\in M$ with the property that $(M^m, g)$ is a $\widetilde Y^x$-manifold. 
Then the ring $H^*(M, \Q)$ is generated by one element. Moreover there exists a complete 
Riemannian metric $\wt g$ on $M$ such that $(M, \wt g)$ is a $Y^x_{2\pi}$-manifold.
\end{cor}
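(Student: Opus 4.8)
The plan is to deduce Corollary~\ref{maincorollary} from Theorem~\ref{maintheorem} together with Fact~\ref{cooltechnical}, feeding the output of the former into the hypothesis of the latter. First I would invoke Theorem~\ref{maintheorem}: since $(M^m,g)$ with $m=2,3$ is a $\widetilde Y^x$-manifold, the theorem guarantees that $M$ is a closed manifold with $|\pi_1(M)|<\infty$. This is the crucial step that converts the analytic/geodesic hypothesis into the purely topological conclusion that $M$ is closed with finite fundamental group. Note that the conclusion of Theorem~\ref{maintheorem} makes no reference to the original metric $g$; it only constrains the underlying topology of $M$.

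Next I would apply Fact~\ref{cooltechnical} to this $M$. Having established that $M^m$, $m=2,3$, is a closed manifold with finite fundamental group, Fact~\ref{cooltechnical} produces a \emph{new} complete Riemannian metric $\wt g$ on $M$ (built, in the case $m=3$, from the round metric on $S^3$ via the Perelman/Thurston elliptization and the quotient by a finite group of isometries, and analogously but more simply for $m=2$) for which $(M,\wt g)$ is a $Y^x_{2\pi}$-manifold. This establishes the second assertion of the corollary verbatim and is the substantive geometric content being transported.

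It remains to derive the cohomological statement. Here I would observe that $(M,\wt g)$ being a $Y^x_{2\pi}$-manifold is in particular a $Y^x_l$-manifold with $l=2\pi\neq 0$, so the original B\'erard-Bergery Theorem~\ref{BB1} applies directly to $(M,\wt g)$ and yields that $H^*(M,\Q)$ is generated by one element. Since the cohomology ring $H^*(M,\Q)$ is a topological invariant of $M$ and does not depend on the choice of metric, this conclusion holds for $M$ itself, completing the proof.

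I do not expect any serious obstacle, since the argument is a clean composition of three results already available: Theorem~\ref{maintheorem} supplies the topology, Fact~\ref{cooltechnical} re-metrizes, and Theorem~\ref{BB1} extracts the cohomological consequence. The only point requiring a moment's care is the logical flow, namely that one must first discard the given metric $g$ (retaining only the topological data it forces via Theorem~\ref{maintheorem}) and then re-equip $M$ with the constructed metric $\wt g$; the single-generator conclusion about $H^*(M,\Q)$ is legitimately transferred back to $M$ precisely because that ring is a diffeomorphism invariant independent of the Riemannian structure.
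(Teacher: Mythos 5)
Your proposal is correct and follows exactly the same route as the paper: Theorem~\ref{maintheorem} gives that $M$ is closed with finite fundamental group, Fact~\ref{cooltechnical} then supplies the metric $\wt g$ making $(M,\wt g)$ a $Y^x_{2\pi}$-manifold, and Theorem~\ref{BB1} applied to $(M,\wt g)$ yields the single-generator statement for $H^*(M,\Q)$. The paper's proof is just a terser version of yours.
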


\begin{proof}
By Theorem~\ref{maintheorem} the manifold $M$ is closed and $|\pi_1(M)|<\infty.$  Hence Corollary~\ref{maincorollary} follows from 
Fact~\ref{cooltechnical} and Theorem~\ref{BB1}.
\end{proof}

If the condition in Definitions~\ref{def:Yx} and~\ref{def:tildeYx} is actually satisfied for all $\epsilon$, 
then it is also satisfied for all sufficiently small $\epsilon.$ The converse is also true, but requires some 
thinking, even though this condition is automatically satisfied for all $\epsilon> \diam(M,g)$.

\begin{cor}\label{anyepsilon}
If the condition in Definitions~\ref{def:Yx} and~\ref{def:tildeYx} of $Y^x$- and $\widetilde Y^x$-manifolds 
is satisfied for all sufficiently small $\epsilon>0$, then in fact it is satisfied for all $\epsilon>0$.
\end{cor}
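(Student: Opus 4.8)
The plan is to prove Corollary~\ref{anyepsilon} by showing that the defining condition, once it holds for all sufficiently small $\epsilon$, propagates to every $\epsilon>0$. The key observation is that the focussing condition is essentially monotone in $\epsilon$ in an approximate sense: if all unit speed geodesics from $y$ reach a point $\epsilon$-close to $x$ at time $l$, then the same geodesics are automatically $\epsilon'$-close to $x$ at time $l$ for any $\epsilon'>\epsilon$. Thus for large $\epsilon$ one can simply reuse the data $(l,y)$ furnished by a small $\epsilon$, and the only genuine content lies in filling the intermediate range of $\epsilon$-values between ``sufficiently small'' and ``arbitrary,'' while simultaneously respecting the constraint $l(\epsilon)>\epsilon$.

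First I would fix $\overline\epsilon>0$ so that the condition holds for all positive $\epsilon<\overline\epsilon$, and I would let $r$ be the radius of a geodesically convex normal neighborhood of $x$, exactly as in the remark following Definition~\ref{def:tildeYx}. The observation there shows that whenever the condition is satisfied for some $\epsilon<\tfrac{r}{2}$, the corresponding return time satisfies $l(\epsilon)>\tfrac{r}{2}$. I would then split into two regimes. For any target $\epsilon$ with $\epsilon<\overline\epsilon$ there is nothing to prove. For $\epsilon\geq\overline\epsilon$, I would pick a single small value $\epsilon_0$ with $\epsilon_0<\min\{\overline\epsilon,\tfrac{r}{2}\}$, take the associated $l(\epsilon_0)$ and (in the $\widetilde Y^x$ case) $y(\epsilon_0)$, and verify that this same data witnesses the condition for the larger $\epsilon$: every unit speed geodesic from $y(\epsilon_0)$ lands $\epsilon_0$-close to $x$ at time $l(\epsilon_0)$, hence a fortiori $\epsilon$-close, and the inequality $l(\epsilon_0)>\tfrac{r}{2}$ does not by itself guarantee $l(\epsilon_0)>\epsilon$ when $\epsilon$ is large. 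This last point is where the real work lies.

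The main obstacle is therefore the requirement $l>\epsilon$ for large $\epsilon$: simply enlarging the tolerance reuses a return time $l$ that may be far smaller than the demanded $\epsilon$. To overcome this I would exploit that geodesics can be allowed to wind, i.e.\ once all geodesics from $y$ focus within $\epsilon_0$ of $x$ at time $l_0:=l(\epsilon_0)$, I expect Lemma~\ref{l:2} (or the periodicity it encodes, yielding that $(M,g)$ is a $Y^x_{2\widetilde l}$-manifold when a limit point exists) to furnish a corresponding focussing at all the times $l_0, 2l_0, 3l_0,\dots$ up to a controlled error. Concretely, by Theorem~\ref{maintheorem} the manifold $M$ is closed, and the focussing behaviour is genuinely periodic in the limiting regime; choosing an integer multiple $kl_0>\epsilon$ then produces a return time exceeding $\epsilon$ while keeping the endpoint within an arbitrarily small neighborhood of $x$, which in particular is within $\epsilon$. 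I would choose $k$ large enough that $kl_0>\epsilon$ and so that the accumulated focussing error stays below $\epsilon$.

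Finally I would assemble the argument: for any prescribed $\epsilon>0$ I produce $l(\epsilon)=k\,l_0>\epsilon$ and, in the $\widetilde Y^x$ case, the point $y(\epsilon)=y_0$, and I check directly that every unit speed geodesic emanating from the relevant base point is $\epsilon$-close to $x$ at time $l(\epsilon)$. Because the construction works uniformly over the geodesic's initial direction (the focussing hypothesis is stated for \emph{every} unit speed geodesic from the base point), no direction-dependent estimate is needed, and the conclusion holds for all $\epsilon>0$. The only subtlety to watch is that the compactness supplied by Theorem~\ref{maintheorem} is what legitimizes iterating the return map to reach arbitrarily large times without losing control of the endpoint, and I would state this dependence explicitly.
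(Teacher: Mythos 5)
Your proposal correctly isolates the real difficulty --- the constraint $l(\epsilon)>\epsilon$ for large $\epsilon$ --- but the mechanism you offer to overcome it has a genuine gap. You fix a single small $\epsilon_0$ with data $(y_0,l_0)$ and then claim that the geodesics ``focus at all the times $l_0, 2l_0, 3l_0,\dots$ up to a controlled error.'' This is unjustified, and for approximate focussing it is simply false: the hypothesis controls only geodesics emanating from the one base point ($x$, or the special point $y_0$), not geodesics emanating from the various points inside $B(x,\epsilon_0)$ where those geodesics land at time $l_0$ with arbitrary directions. There is therefore no a priori control at time $2l_0$, and in the $\widetilde Y^x$ case the situation is worse still, since the next focussing datum concerns geodesics from a different special point $y_1$ that need not be anywhere near $x$. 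Iteration is legitimate only in the \emph{exact} situation delivered by Lemma~\ref{l:2}, where $(M,g)$ is a genuine $Y^x_{2\widetilde l}$-manifold, so that $\gamma(2\widetilde l)=x$ exactly and hence $\gamma(2k\widetilde l)=x$ for every $k$; no error accumulates because there is no error. Moreover Lemma~\ref{l:2} cannot be applied to your single pair $(y_0,l_0)$: it requires a limit point of the whole sequence $\{(y_n,l_n)\}$ associated to $\epsilon_n\to 0$.

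What is missing is the dichotomy the paper runs on: take $\epsilon_n\to 0$ with associated $(y_n,l_n)$ and ask whether $\{l_n\}$ is bounded. If it is unbounded, the corollary is immediate and no iteration is needed --- given any $\epsilon>0$ simply choose $n$ with $l_n>\epsilon$ and $\epsilon_n<\epsilon$ and reuse $(y_n,l_n)$; your ``a fortiori'' observation covers the tolerance. If it is bounded, compactness of $M$ (Theorem~\ref{maintheorem}) yields a convergent subsequence, Lemma~\ref{l:2} upgrades the approximate focussing to the exact $Y^x_{2\widetilde l}$ property, and one takes $l(\epsilon)=2k\widetilde l>\epsilon$ with $y=x$. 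Your write-up never makes this case split, and your iteration scheme occupies exactly the territory (bounded-looking return times, approximate focussing) where it cannot be carried out as stated.
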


\begin{proof}
We give the proof for the $\widetilde Y^x$-manifolds. The proof for $Y^x$-manifolds is similar and in fact 
simpler.

The case $\dim M=1$ is trivial.
Assume $(M^m,g), m>1$ is a $\widetilde Y^x$-manifold and let $\overline \epsilon>0$ 
be as in the definition of a $\widetilde Y^x$-manifold. Choose a sequence of positive numbers 
$\{\epsilon_n\}_{n=1}^{\infty}$, $\epsilon_n<\overline \epsilon_n$ with $\lim_{n\to \infty}\epsilon_n=0.$
Choose a sequence 
$\{l_n\}_{n=1}^{\infty}$ of positive numbers $l_n>\epsilon_n$ 
and a sequence $\{y_n\}_{n=1}^{\infty}$ of points in $M$ 
such that all geodesics from $y_n$ in time $l_n$ focus within $\epsilon_n$ from $x$. 

$M$ is compact by Theorem~\ref{maintheorem}. 
If the sequence 
$\{l_n\}_{n=1}^{\infty}$ is bounded, then $\{(y_n, l_n)\}_{n=1}^{\infty}$ contains a subsequence converging to 
some $(\widetilde y, \widetilde l)$. Lemma~\ref{l:2} says that $\widetilde l>0$ and $(M,g)$ is a 
$Y^x_{2\widetilde l}$-manifold. Then for a given $\epsilon>0$ one takes $k\in \N$ so that 
$2k\widetilde l>\epsilon$ and puts $y=x$ and $l(\epsilon)=2k\widetilde l$.
Thus the condition of Definition~\ref{def:tildeYx} is in fact satisfied for all 
$\epsilon >0$, rather than just for sufficiently small $\epsilon$.

If the sequence $\{l_n\}_{n=1}^{\infty}$ is not bounded, then we choose a monotonically increasing subsequence 
$\{l_{n_k}\}_{k=1}^{\infty}$ with $\lim_{k\to \infty}l_{n_k}=+\infty.$ Now take any 
$\epsilon>0$ that is not necessarily less than $\overline \epsilon.$ Choose $K$ such that $l_{n_K}>\epsilon$
and $\epsilon_{n_K}<\epsilon.$ Clearly the point $y=y_{n_K}$ and the positive number $l=l_{n_K}$ satisfy 
the requirements of 
definition~\ref{def:tildeYx} for the chosen $\epsilon>0.$
\end{proof}

\begin{cor}\label{setisclosed}
Let $(M,g)$ be a Riemannian manifold, then the possibly empty set 
$\widetilde Z=\{z \in M | (M,g)\text{ is a } \widetilde Y^z\}$ is a closed subset of $M.$
\end{cor}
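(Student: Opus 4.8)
The plan is to take a point $z$ in the closure of $\widetilde Z$, choose a sequence $z_n\to z$ with each $z_n\in\widetilde Z$, and show directly that $(M,g)$ is a $\widetilde Y^z$-manifold, so that $z\in\widetilde Z$. If $\widetilde Z$ is empty there is nothing to prove, so I may fix some $z_0\in\widetilde Z$. The case $\dim M=1$ is elementary (the only relevant manifold is $S^1$, every point of which lies in $\widetilde Z$), so I assume $\dim M\ge 2$ and invoke Theorem~\ref{maintheorem} to conclude that $M$ is a closed, hence compact, manifold. Compactness then furnishes a global convexity radius $c>0$: every point of $M$ is the center of a geodesically convex normal ball of radius $c$.

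I then verify the requirements of Definition~\ref{def:tildeYx} for $z$ with threshold $\overline\epsilon=c/2$, transferring the focusing property from $z_n$ to $z$ by the triangle inequality. Fix $\epsilon$ with $0<\epsilon<c/2$. Choose $n$ so large that $d(z_n,z)<\epsilon/2$. Since $z_n\in\widetilde Z$, Corollary~\ref{anyepsilon} applied to $z_n$ guarantees that the focusing condition at $z_n$ holds for the value $\epsilon/2$: there are $l=l(\epsilon/2)$ and a point $y$ such that every unit speed geodesic $\gamma$ from $y$ satisfies $d(\gamma(l),z_n)<\epsilon/2$. For any such $\gamma$ the triangle inequality gives $d(\gamma(l),z)\le d(\gamma(l),z_n)+d(z_n,z)<\epsilon$, so all geodesics from $y$ focus within $\epsilon$ from $z$ at time $l$.

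The one point that needs care, and the step I expect to be the main obstacle, is verifying the requirement $l>\epsilon$ in Definition~\ref{def:tildeYx}, since a priori the focusing time for radius $\epsilon/2$ at $z_n$ is only known to exceed $\epsilon/2$. This is exactly where the uniform convexity radius enters. Applying to the center $z_n$ the bound established in the reformulation remark following Definitions~\ref{def:Yx} and~\ref{def:tildeYx} (with convex radius $r=c$), any focusing time realizing focusing radius smaller than $c/2$ at $z_n$ necessarily exceeds $c/2$. Since $\epsilon/2<c/2$, this yields $l>c/2>\epsilon$, uniformly in $n$. Thus both $d(\gamma(l),z)<\epsilon$ and $l>\epsilon$ hold, so $(M,g)$ is a $\widetilde Y^z$-manifold and $z\in\widetilde Z$; consequently $\widetilde Z$ is closed.

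An alternative to the convexity-radius bound, should one wish to avoid it, is to use compactness together with Lemma~\ref{l:2} (exactly as in the proof of Corollary~\ref{anyepsilon}) to produce, for each $z_n$ and each prescribed bound $L$, focusing data for $z_n$ with focusing radius $\epsilon/2$ and time $l>L$; taking $L=\epsilon$ then gives $l>\epsilon$ outright, after which the same triangle-inequality argument applies.
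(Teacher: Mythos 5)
Your argument is correct and takes essentially the same route as the paper's: compactness via Theorem~\ref{maintheorem}, a uniform radius (your convexity radius $c$, the paper's radius $L$ on which every exponential map is a diffeomorphism) yielding the uniform lower bound $l>c/2$ on all focusing times, and then the triangle inequality with focusing radius $\epsilon/2$ at a nearby point of $\widetilde Z$. The only cosmetic difference is that you handle the issue of $\epsilon/2$ possibly exceeding the a priori threshold $\overline\epsilon(z_n)$ by citing Corollary~\ref{anyepsilon}, while the paper observes directly that focusing data for a smaller radius also serves for a larger one once the uniform bound $l>L/2$ is in hand.
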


We do not know if for a Riemannian manifold $(M,g)$, the possibly empty set 
$Z=\{z \in M | (M,g)\text{ is a } Y^z\}$ is always a closed subset of $M.$

\begin{proof}
If a connected $M$ has dimension one, then the statement is obvious, since $M$ is either diffeomorphic to 
$S^1$ or to $\R^1.$ In the first case $\widetilde Z=M$, in the second case $\widetilde Z=\emptyset.$
Similarly the statement is obvious if $\widetilde Z=\emptyset.$ So we consider the case 
$\widetilde Z\neq \emptyset$, and $\dim M>1.$

$M$ is compact by Theorem~\ref{maintheorem}. Thus there exists $L>0$ such 
that for every $p\in M$ the exponential map restricted to the radius $L$ ball centered at 
${\bf 0}\in T_pM$ is a diffeomorphism.
Take $p\in \widetilde Z$ and $\overline \epsilon(p)>0$ from the definition of a 
$\widetilde Y^p$-manifold. For each $\epsilon(p)<\overline \epsilon (p)$ we should have $y(p,\epsilon)\in M$ and 
$l(p,\epsilon)>\epsilon(p)$ such that if $\gamma(t)$ is a unit speed geodesic satisfying 
$\gamma(0)=y(p,\epsilon)$ then the distance $d_g(\gamma(l(p,\epsilon)), p)<\epsilon.$ 
By definition of $L$ we have that $l(p, \epsilon)>\frac{L}{2}.$ Thus we can put 
$\overline\epsilon (p)=\frac{L}{2}$ where the right hand side does not depend on $p\in \widetilde Z.$

It suffices to show that if $\widehat z$ is a limit point of $\widetilde Z$, then  
$\widehat z\in \widetilde Z.$ Put $\overline \epsilon (\widehat z)=\frac{L}{2}$
and take any $\epsilon< \overline \epsilon (\widehat z).$ Take $z_N\in \widetilde Z$ such that 
$d_g(\widehat z, z_N)<\frac{\epsilon}{2}.$
Since $z_N\in \widetilde Z$ and $\frac{\epsilon}{2}<\frac{L}{4}<\frac{L}{2}$ by the previous discussion 
there exist $y\in M$ and $l>\frac{L}{2}$ with the property that if $\gamma(t)$ is a unit speed geodesic 
satisfying $\gamma(0)=y$ then $d_g(\gamma(l), z_N)<\frac{\epsilon}{2}.$ Since 
$d_g(\widehat z, z_N)<\frac{\epsilon}{2}$ we have  $d_g(\gamma(l), \widehat z)<\epsilon$ for all 
unit speed geodesics $\gamma(t)$ satisfying $\gamma(0)=y.$ 
\end{proof}

\begin{cor}\label{coverings}
Let $x\in \widetilde M$ be a point in the connected total space of a Riemannian covering 
$\rho:(\widetilde M^m, \widetilde g)\to (M^m,g), m\geq 2$. Then the following two statements hold:
\begin{description}
\item[${\bf 1}$] $(\widetilde M, \widetilde g)$ is a $\widetilde Y^x$-manifold 
if and only if $(M,g)$ is a $\widetilde Y^{\rho(x)}$-manifold. 
\item[${\bf 2}$] $(\widetilde M, \widetilde g)$ is a $Y^x$-manifold 
if and only if $(M,g)$ is a $Y^{\rho(x)}$-manifold. 
\end{description}
\end{cor}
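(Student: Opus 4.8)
The plan is to reduce everything to \emph{projecting} and \emph{lifting} unit-speed geodesics, using that in every case at issue both spaces are closed and $\rho$ is a finite covering. Indeed, if either $(\widetilde M,\widetilde g)$ is $\widetilde Y^x$ or $(M,g)$ is $\widetilde Y^{\rho(x)}$, then \theoref{maintheorem} makes the corresponding space closed with finite fundamental group; since a Riemannian covering is a local isometry, compactness passes to the other space, and the fibers, being compact and discrete, are finite, so $\rho$ is a $k$-sheeted covering of closed manifolds. I will use that $\rho$ is distance non-increasing, that $d\rho$ restricts to a linear isometry of unit tangent spheres $U_{\widetilde y}\widetilde M\to U_{\rho(\widetilde y)}M$ (so unit-speed geodesics downstairs lift uniquely to unit-speed geodesics from any chosen lift of their initial point, and conversely project), and that for $\epsilon$ below the radius of an evenly covered ball about $\rho(x)$ the preimage $\rho^{-1}(B(\rho(x),\epsilon))$ is a disjoint union of balls $\widetilde B_1,\dots,\widetilde B_k$, each mapped isometrically onto $B(\rho(x),\epsilon)$ and hence contained in the $\epsilon$-ball about the lift $x_i\in\widetilde B_i$ of $\rho(x)$ it contains. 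The two ``projection'' implications are then immediate: given witnesses $\widetilde y,\widetilde l$ for $(\widetilde M,\widetilde g)$ being $\widetilde Y^x$ at a small $\epsilon$, set $y=\rho(\widetilde y)$, $l=\widetilde l$; any unit geodesic from $y$ lifts to one from $\widetilde y$, whose time-$l$ point is within $\epsilon$ of $x$, so its projection is within $\epsilon$ of $\rho(x)$, giving $(M,g)$ is $\widetilde Y^{\rho(x)}$. Statement~2's projection direction is identical with $y=\widetilde y=x$ forced.

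For the converse in statement~1, suppose $(M,g)$ is $\widetilde Y^{\rho(x)}$ and fix small $\epsilon$ with witnesses $y,l>\epsilon$. List the lifts of $y$ as $\widetilde y_1,\dots,\widetilde y_k$ and of $\rho(x)$ as $x_1=x,\dots,x_k$ with balls $\widetilde B_i$ as above. The key point, and the only place $m\geq2$ is used, is this: for each $j$ the map sending $v\in U_{\widetilde y_j}\widetilde M$ to the time-$l$ point of the geodesic with initial velocity $v$ is continuous, and composing with $\rho$ gives a unit geodesic from $y$ at time $l$, hence a point within $\epsilon$ of $\rho(x)$; so this map lands in $\bigsqcup_i\widetilde B_i$, and since $U_{\widetilde y_j}\widetilde M\cong S^{m-1}$ is connected its image lies in a single $\widetilde B_{\sigma(j)}$, i.e. all geodesics from $\widetilde y_j$ focus within $\epsilon$ of $x_{\sigma(j)}$. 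Lifting one fixed geodesic from $y$ from each $\widetilde y_j$ and using that path-lifting induces a bijection of fibers, the time-$l$ endpoints run over all $k$ lifts of a point of $B(\rho(x),\epsilon)$, one in each $\widetilde B_i$; hence $\sigma$ is a bijection and some $\widetilde y_{j_0}$ has $\sigma(j_0)=1$, so all its geodesics focus within $\epsilon$ of $x_1=x$. Taking $\widetilde y=\widetilde y_{j_0}$, $\widetilde l=l$ verifies $\widetilde Y^x$ for this $\epsilon$, hence for $(\widetilde M,\widetilde g)$.

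The remaining implication, $(M,g)$ is $Y^{\rho(x)}$ $\Rightarrow$ $(\widetilde M,\widetilde g)$ is $Y^x$, is where I expect the genuine difficulty, because the definition of $Y^x$ forbids choosing the source: I must make geodesics emitted from $x$ \emph{itself} refocus near $x$, whereas the connectedness argument (applied to the lift $x$ of $y=\rho(x)$) only shows they focus near \emph{some} lift $\Phi_l(x)$, and in general $\Phi_l(x)\neq x$ (already for $\rpp$ covered by $S^2$ with $l=\pi$, where geodesics from a pole focus at the opposite pole). The natural remedy is to iterate: $\Phi_l$ is a permutation of the $k$-element fiber $\rho^{-1}(\rho(x))$, so $\Phi_l^{N}=\mathrm{id}$ for $N=\mathrm{ord}(\Phi_l)$, and one wants geodesics from $x$ to return to $\Phi_l^{N}(x)=x$ at time $Nl$. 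This iteration is rigorous exactly when the focusing is \emph{exact}. Running the argument along $\epsilon_n\to0$ with witnessing times $l_n$ (all with source $\rho(x)$), in the case that $\{l_n\}$ is bounded I would pass to a limit point $(\rho(x),\widetilde l)$ and invoke \lemref{l:2} to get that $(M,g)$ is genuinely a $Y^{\rho(x)}_{2\widetilde l}$-manifold; then for every lift the time-$2\widetilde l$ footpoints lie in a discrete set and, being the image of a connected sphere, reduce to a single lift, so $\Phi_{2\widetilde l}$ is an honest permutation and exact iteration gives $(\widetilde M,\widetilde g)$ is $Y^x_{2N\widetilde l}$, in particular a $Y^x$-manifold.

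The main obstacle is the complementary case, where the times $l_n$ are unbounded. Here approximate focusing from $x$ lands near a fixed lift $q^{*}$ (pigeonhole on the finite fiber), but the iteration breaks: continuing the geodesics for a second long interval amplifies the $\epsilon_n$-discrepancy at $q^{*}$ through Jacobi fields, so no continuous-dependence estimate closes the loop back to $x$, and extracting an exact limit fails because the differentials of the time-$l_n$ geodesic flow need not be equicontinuous as $l_n\to\infty$. This is precisely the difficulty the authors flag in the Remark preceding \theoref{maintheorem} and relate to Besse's Questions~7.70--7.71; it would be void if every $Y^x$-manifold were a $Y^x_l$-manifold, which is not known. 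Thus my plan is complete and clean for statement~1 and for statement~2 under bounded focusing times, and I expect the unbounded-time case of statement~2 to be the crux, requiring the same delicate analysis that underlies the paper's central results.
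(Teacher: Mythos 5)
Your treatment of statement \textbf{1} is correct and essentially the paper's argument: the projection direction is immediate, and for the lifting direction both you and the paper use an evenly covered ball around $\rho(x)$ together with connectedness of the unit sphere $S^{m-1}$ in $T_{\widetilde y}\widetilde M$ (the only place $m\geq 2$ enters) to force all time-$l$ endpoints into a single sheet; the paper simply picks the right lift $\widetilde y$ by lifting one geodesic backwards from the sheet containing $x$, whereas you enumerate sheets and argue your $\sigma$ is a bijection. Likewise the easy direction of \textbf{2} and your reduction of the bounded-times case to Lemma~\ref{l:2} are fine.

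The genuine gap is the one you flag yourself: the hard direction of statement \textbf{2} when the focusing times are unbounded. The paper does close this case, and the idea you are missing is that one should \emph{not} try to iterate a fixed return map $\Phi_l$ (which, as you correctly observe, amplifies the $\epsilon$-error uncontrollably through the second leg). Instead, first reduce to the universal covering. Fix the target tolerance $\epsilon$ and apply the $Y^{\rho(x)}$ property once to get $l_1$ and a lift $x_1$ with all geodesics from $x_1$ focusing within $\epsilon/2$ of $x$ at time $l_1$. Now, \emph{with $l_1$ already fixed}, compactness and continuity of the geodesic flow give a $\widetilde\epsilon_1<\epsilon/2$ such that any geodesic starting within $\widetilde\epsilon_1$ of $x_1$ still lands within $\epsilon$ of $x$ after time $l_1$. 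Then apply the $Y^{\rho(x)}$ property again with the \emph{new, smaller} tolerance $\widetilde\epsilon_1$ to obtain $l_2$ and a lift $x_2$ whose geodesics focus within $\widetilde\epsilon_1$ of $x_1$ at time $l_2$; concatenating, all geodesics from $x_2$ focus within $\epsilon$ of $x$ at time $l_2+l_1$. The new leg is \emph{prepended}, so its error is fed into a leg whose distortion was already controlled when $\widetilde\epsilon_1$ was chosen --- no equicontinuity as $l\to\infty$ is ever needed. Iterating and using that the fiber $\rho^{-1}(\rho(x))$ is finite (Theorem~\ref{maintheorem}), some lift repeats, $x_j=x_i$; the composite deck transformation $\alpha_j\cdots\alpha_{i+1}$ then fixes $x_i$, hence is the identity because the deck group of the universal cover acts freely, and equivariance turns ``geodesics from $x_i$ return within $\epsilon$ of $x_i$ at time $l_{i+1}+\cdots+l_j$'' into the same statement at $x$ itself. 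Since $\epsilon$ was arbitrary (small), this verifies the $Y^x$ condition. So the case you left open is not the intractable crux you suspect, though your instinct that naive iteration fails is sound.
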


The statement of this Theorem is false when $M=S^1$ and $\widetilde M=\R^1.$
Note that we use Theorem~\ref{maintheorem} only in the proof of statement ${\bf 2}$.

\begin{proof} We prove statement ${\bf 1}$.
Assume that $(\widetilde M, \widetilde g)$ is a $\widetilde Y^x$-manifold. For a 
sufficiently small $\epsilon>0$ take $y\in M$ and $l>\epsilon$ such that all the geodesics from $y$ 
in time $l$ focus withing $\epsilon$ from $x.$
Since $\rho$ is a Riemannian covering, we get that all the geodesics from $\rho(y)$ in time $l$ focus within
$\epsilon$ from $\rho(x).$
Thus $(M,g)$ is a $\widetilde Y^{\rho(x)}$-manifold. 

Now we prove the other implication. Take $\overline \epsilon >0$ as in the definition of $(M,g)$ 
being a $\widetilde Y^{\rho(x)}$-manifold. We assume without the loss of generality that the exponential 
map $\exp_{\rho (x)}:T_{\rho(x)}M\to M$ restricted to the ball of radius $\overline  \epsilon$ centered at 
${\bf 0}\in T_{\rho (x)}M$ is a diffeomorphism.  We put $B$ to be the open neighborhood of $\rho(x)$ 
that is the image of the restriction of $\exp_{\rho (x)}$ map to this ball. 
Decreasing $\overline \epsilon$ if necessary we can and do assume that $B$ is trivially covered under $\rho$.

Choose any positive $\epsilon<\overline \epsilon$ and let $l>\epsilon$ and $y\in M$ be such that 
all geodesics from $y$ in time $l$ focus within $\epsilon$ from $x.$
Choose a unit speed geodesic $\gamma(t)$ such that 
$\gamma(0)=y.$ Put $z=\gamma(l)\in B.$ Put $\widetilde B$ to be the connected component of 
$\rho^{-1}(B)$ containing $x$ and put $\widetilde z$ to be the unique point of $\rho^{-1}(z)$ located within 
$\widetilde B.$ Let $\widetilde \gamma(t)$ be the lift of the path $\gamma(t)$ such 
that $\widetilde \gamma(l)=\widetilde z.$ 
Put $\widetilde y$ to be $\widetilde \gamma(0),$ so that $\rho(\widetilde y)=y.$
Since positive $\epsilon<\overline \epsilon$ was arbitrary, to finish the proof 
it suffices to show that all the geodesics from $\widetilde y$ in time $l$ 
focus within $\epsilon$ from $x.$
By our choice of $y$ and $l$  and since $\rho$ is a covering, the end point of each of these geodesic arcs 
of length $l$ starting from $\widetilde y$ is located within $\epsilon$ from one of the points in 
$\rho^{-1}(\rho(x))$ and thus within one of connected components of the preimage of $B$ under $\rho.$ 
These end points continuously depend on the initial directions of the geodesic arcs. 
Since $\dim M>1$,  the sphere of unit vectors in $T_{\widetilde y}\widetilde M$ is connected. 
Since $B$ is covered trivially under $\rho$, the end points of all these length $l$ geodesic arcs starting 
at $\widetilde y$ are located within epsilon 
from the same point in $\rho^{-1}(\rho(x))$ as the end point $\widetilde \gamma(l)$. This point is $x.$

Now we prove statement ${\bf 2}$. Clearly if $(\widetilde M, \widetilde g)$ is a $Y^x$-manifold, 
then $(M,g)$ is a $Y^{\rho(x)}$-manifold. 

Now we prove the other implication. Every  $(\widetilde M,\widetilde g)$ is the base space of the 
Riemannian cover by the total space of the universal Riemannian cover of $(M,g).$ Thus by the previously 
proved implication it suffices to prove the statement when $\rho$ is the universal Riemannian covering.

Let $r>0$ be such that $\exp_{\rho(x)}:T_{\rho(x)}M\to M$ restricted to the radius $r$ ball centered 
at ${\bf 0}\in T_{\rho(x)}M$ is a diffeomorphism. Let $\overline \epsilon>0$ 
be as in Definition~\ref{def:Yx}.
Choose any positive $\epsilon<\min(\frac{r}{2}, \overline \epsilon)$. 
Take $l_1$ such that all geodesics from 
$\rho(x)$ in time $l_1$ focus within $\frac{\epsilon}{2}$ from $\rho(x).$ Similar to the proof of 
Corollary~\ref{setisclosed}, we get that $l_1>\frac{r}{2}.$
As in the proof of the first statement of the Theorem we get that there is $x_1\in \rho^{-1}(\rho(x))$ 
such that 
all geodesics from $x_1$ in time $l_1$ focus within $\frac{\epsilon}{2}$ from $x.$ If $x_1=x$, then we found 
the desired $l=l_1.$ 

Assume that $x_1\neq x.$ 
Let $\Gamma$ be the group of deck transformations of the universal covering $\rho$. 
It acts transitively on $\rho^{-1}(\rho(x))$ 
and we put $\alpha_1\in \Gamma$ to be such that $x_1=\alpha_1(x).$
Since $\rho$ is a Riemannian covering, we get that for each $x'\in \rho^{-1}(\rho(x))$ all the geodesics 
from $\alpha_1(x')$ in time $l_1$ focus within $\frac{\epsilon}{2}$ from $x'.$

Since the geodesic flow $STM\times \R\to STM$ is continuous, we get that there is a small positive
$\widetilde \epsilon_1<\frac{\epsilon}{2}$, such that for all the points $y$ in the 
$\widetilde \epsilon_1$-ball centered at $x_1$ all the geodesics from $y$ in time $l_1$ focus within 
$\epsilon=2\frac{\epsilon}{2}$ from $x.$

Repeat the previous argument to find $l_2>0$ and $x_2\in \rho^{-1}(\rho(x))$ such that all geodesics from 
$x_2$ in time $l_2$ focus within $\widetilde \epsilon_1$ of $x_1.$ Thus all the geodesics from $x_2$ in time 
$l_2+l_1$ focus within $\epsilon$ of $x.$ Put $\alpha_2\in \Gamma$ to be such that $x_2=\alpha_2(x_1)$.
Then for every $x'\in \rho^{-1}(\rho(x))$ all the geodesics from 
$\alpha_2\alpha_1(x')$ in time 
$l_1+l_2$ focus within $\epsilon$ from $x';$ all the geodesics from $\alpha_2(x')$ in time $l_2$ 
focus within $\widetilde \epsilon_1<\epsilon$ of $\alpha_1(x')$; and all the geodesics 
from $\alpha_1(x')$ in time $l_1$ focus within $\frac{\epsilon}{2}<\epsilon$ of $x'$. Proceed by 
induction. 

By Theorem~\ref{maintheorem} $|\rho^{-1}(\rho(x))|<\infty.$ 
So at a certain step of the inductive process the newly chosen $x_j\in \rho^{-1}(\rho(x))$ will coincide 
with the previously chosen $x_i, i<j.$ Then all the geodesics from 
$x_i=x_j=\alpha_j \alpha_{j-1}\dots \alpha_{i+1}(x_i)$ in time $l_j+l_{j-1}+\dots +l_{i+1}$ focus within 
$\epsilon$ from $x_i$. Since $\rho$ is a Riemannian cover, we get that all the geodesics 
from $\alpha_j \alpha_{j-1}\dots \alpha_{i+1}(x)$ in time $l_j+l_{j-1}+\dots +l_{i+1}$ focus within 
$\epsilon$ from $x$.

\end{proof}

\section{Brief Introduction to Lorentzian Manifolds}\label{LorentzGeometry}

A \emph{Lorentzian manifold} $(X^{m+1}, g^L)$ is a Pseudo-Riemannian manifold whose metric tensor $g^L$ is of signature $(m, 1)$. In other words, each point $p\in X$ of a Lorentzian manifold $(X^{m+1}, g^L)$ has a coordinate neighborhood with coordinates $(x_1, ..., x_{m+1})$ such that the metric tensor $g^L|T_pX\times T_pX$ is of the form
\[
    dx_1^2 + dx_2^2 +\cdots + dx_{m}^2 - dx_{m+1}^2,
\]
where $T_pX$ is the tangent space of $X$ at $p$. 

A nonzero vector $v\in T_pX$ of a Lorentzian manifold $(X^{m+1}, g^L)$ is said to be {\em timelike,
non-spacelike, null \textup{(}lightlike\textup{)}, {\rm or} spacelike\/} if
$g^L(v,v)$ is negative, non-positive, zero, or positive,
respectively. A piecewise smooth curve $\gamma(t)$ is called {\em timelike,
non-spacelike, null, {\rm or} spacelike\/} if all of its velocity
vectors $\gamma'(t)$ are respectively timelike, non-spacelike, null, or
spacelike. 

For each $p\in X$ the set of all non-spacelike vectors in
$T_pX$ has two connected components that are hemicones. A
continuous \textup{(}with respect to $p\in X$\textup{)} choice of a hemicone of
non-spacelike vectors in $T_pX$ is called a {\em time
orientation\/} of $(X, g^L).$ The non-spacelike vectors from the chosen hemicones are called
{\em future pointing vectors.\/}
A piecewise smooth curve is said to be {\em future directed\/} if all of its
velocity vectors are future pointing. A connected time-oriented Lorentzian manifold without boundary is called a {\it space-time\/}.

For two events $x,y$ in a space-time $(X, g^L)$ we 
write $x\leq y$ if $x=y$ or if there is a piecewise smooth future
directed non-spacelike curve from $x$ to $y.$ For $x\in (X,g^L)$, the spaces
\[
J^+(x)=\{y\in X | x\leq y\} \qquad \mathrm{and}\qquad  J^-(x)=\{y\in X | y\leq x\}
\]
are called the {\em causal future \/} and {\em causal past of $x$\/} respectively. Two events $x,y$ are {\it causally related\/} if $y\in J^{\pm}(x).$ A space-time $(X^{m+1}, g^L)$ is {\it causal\/} if it does 
not have closed future directed non-spacelike curves.

An open set in a space-time is {\em causally convex\/} if there are no
future directed non-spacelike curves intersecting it in a disconnected set. A
space-time is {\em strongly causal\/} if every point in it has
arbitrarily small causally convex neighborhoods. A strongly causal
space-time $(X, g^L)$ is {\em globally hyperbolic\/} if $J^+(x)\cap
J^-(y)$ is compact for all $x,y\in X.$

A {\em Cauchy surface\/} $M$ is a subset of a space-time $(X, g^L)$ 
such that for every inextendible future directed non-spacelike curve $\gamma(t)$ in $X$ there
exists exactly one value $t_0$ of $t$ with $\gamma(t_0)\in M$.  A space-time can be shown
to be globally hyperbolic if and only if it admits a Cauchy surface,
see~\cite[pages 211-212]{HawkingEllis}.

\begin{usefulfacts}\label{usefulfacts}
$(a)$ Every Lorentzian manifold $(X,g^L)$ has a unique Levi-Civita
connection, see for example {\rm \cite[page 22]{BeemEhrlichEasley}.} This allows one to talk about geodesics
and about null-geodesics, i.e., geodesics whose velocity vector is null everywhere. An affine 
reparameterization of a null geodesic also is a null geodesic. However, contrary to the Riemannian geometry, 
null geodesics do not have a canonical parametrization. A curve is called a {\it pregeodesic\/} if it can be 
reparameterized to be a geodesic.

$(b)$ The pioneer result of Geroch~\cite{Geroch} says that every
globally hyperbolic space-time $(X^{m+1}, g)$ is {\em homeomorphic} to
$M^m\times \R$ where every $M\times t\subset X$ is a Cauchy surface. 

$(c)$ Bernal and Sanchez~\cite[Theorem 1]{BernalSanchez},~\cite[Theorem
1.1]{BernalSanchezMetricSplitting},~\cite[Theorem
1.2]{BernalSanchezFurther} have proved that a Cauchy surface $M$ in a globally hyperbolic 
space-time $(X^{m+1}, g^L)$ can always be chosen to be smooth and {\it spacelike\/} i.e., $g^L|TM$ is Riemannian. 
Moreover in this case $X$ is diffeomorphic to $M\times \R$, each slice $M\times t$ is a smooth spacelike Cauchy 
surface, and any two such Cauchy surfaces are diffeomorphic. They also proved~\cite{BernalSanchezCausal}
that in the definition of globally hyperbolic space-times it suffices to require that $(X^{m+1}, g^L)$ is 
causal rather than that it is strongly causal.

$(d)$ Let $(M, g)$ be a Riemannian manifold, 
and let $f:(\alpha, \beta) \to (0, +\infty)$ be a smooth positive function, where 
$-\infty\leq \alpha<\beta\leq +\infty$. Then 
the warped product space-time $(M\times (\alpha, \beta), f(t)g\oplus -dt^2)$ is globally hyperbolic and each 
$M\times t$ is a smooth spacelike Cauchy surface, see ~\cite[Theorem 3.66]{BeemEhrlichEasley}.

$(e)$ Two Lorentz manifolds $(X_1, g^L_1)$ and $(X_2, g^L_2)$ are said to be {\it conformal equivalent\/}
if there exists a diffeomorphism $f:X_1\to X_2$ and a positive smooth function $\Omega:X_1\to (0, +\infty)$
such that $g_1^L=\Omega f^*(g^L_2).$ If $\gamma$ is a timelike or spacelike or null curve in $(X_1, g^L_1)$,
then clearly $f(\gamma)$ is respectively a timelike or spacelike or null curve in $(X_2, g^L_2)$. Moreover
if $\gamma$ is a null pregeodesic, then $f(\gamma)$ also is a null 
pregeodesic~\cite[Lemma 9.17]{BeemEhrlichEasley}. The similar statement is generally false for 
spacelike and timelike pregeodesics.

\end{usefulfacts}

\section{Refocussing and examples}\label{refocussing}
\begin{defin}	[Strongly refocussing Lorentzian manifolds]
We say that a Lorent\-zian manifold $(X^{m+1}, g^L)$ is {\it strongly refocussing at $x\in X$\/} 
if there exists $y\in X$ such that for every (inextendible) null geodesic 
$\nu(t)$ with $\nu(0)=y$ there exists nonzero $\tau=\tau(\nu)$ such that $\nu(\tau)=x.$ 
Note that this $\tau$ may and generally does depend on the choice of the null geodesic $\nu.$ 

We say that a Lorentzian manifold is {\it strongly refocussing} if it is strongly refocussing at some point. 
\end{defin}

We require $\tau\neq 0$ since otherwise we always have refocussing via choosing $y=x$ and $\tau=0.$ 
This definition means that all the light rays through $y$ also pass through $x$ (for nontrivial reasons).

\begin{defin}[Weakly refocussing Lorentzian manifolds] 
We say that $(X^{m+1}, g^L)$ is {\it (weakly) refocussing at $x\in X$\/} if there exists 
open $U\ni x$ such that given any open $V$ with $x\in V\subset U$ there exists $y\not \in V$ 
such that all the null geodesics through $y$ pass through $V.$  Note that these null geodesics are not
required to pass through $x$.

We say that a Lorentzian manifold is {\it weakly refocussing} if it is weakly refocussing at some point. 
\end{defin}

This definition was 
introduced by Low~\cite{LowNullgeodesics, LowRefocussing} for the physically 
interesting strongly causal space-times.

\begin{rem}\label{conformal}
Let $(X_1, g^L_1)$ and $(X_2, g^L_2)$ be conformal space-times. Let $f:X_1\to X_2$ be a diffeomorphism and 
$\Omega:X_1\to (0,+\infty)$ be a smooth positive function such that $\Omega f^*(g^L_2)=g^L_1$. If $\gamma(t)$ 
is a null pregeodesic for $(X_1, g^L_1)$, then $f(\gamma(t))$ is a null pregeodesic for $(X_2, g^L_2)$, 
see~\cite[Lemma 9.17]{BeemEhrlichEasley}. 

Thus if $(X_1, g^L_1)$ is refocussing 
(respectively strongly refocussing) at 
$x\in X_1$, then $(X_2, g^L_2)$ is refocussing (respectively strongly refocussing) at $f(x).$
In particular if  $(X_1, g^L_1)$ and $(X_2, g^L_2)$ are conformal equivalent, then one is refocussing
exactly when the other one refocussing, and the same is true for strong refocussing.
\end{rem}

\begin{example}[Chernov-Rudyak construction~\cite{ChernovRudyak} of strongly refocussing space-times]\label{examplerefocussing} 
Let $(M,g)$ be a $Y^x_l$ manifold for some $x\in M$ and nonzero $l\in \R.$ Consider the Lorentzian product manifold 
$(X^{m+1}, g^L)=(M\times \R, g\oplus -dt^2)$. Then all the null geodesics 
through $(x, t-l)$ pass through $(x,t)$. Thus the globally hyperbolic space-time $(X^{m+1}, g^L)$ is strongly refocussing at $(x, t)$ for each $t\in \R$ (see \cite[Section 11, Remark 7]{ChernovRudyak}). 
\end{example}

Example~\ref{examplerefocussing} can be modified to yield a strongly refocussing Lorentzian manifold with a metric that is not a product metric. Indeed, let $U$ be an open neighborhood of the singular hypersurface in $X^{m+1}$ covered by the union of the arcs of the null geodesics from $(x, -l)$ to $(x,0).$ Let $g^L_U$ be any Lorentzian metric that equals to $g\oplus-dt^2$ on $U$. Then $(M\times \R, g^L_U)$ is strongly refocussing at $(x,0)$. This gives a vast collection of strongly 
refocussing Lorentzian manifolds with a metric that is not the product metric.

\begin{example}[Weakly refocussing space-times]
From the proof of Theorem~\ref{maintheorem} it is easy to see that if $(M,g)$ is a $\widetilde Y^x$-manifold for some $x\in M$, then $(M\times \R, g\oplus -dt^2)$ is refocussing 
at $(x, t)$ for each $t\in \R.$ 
\end{example}

\begin{example}[Kinlaw~\cite{Kinlaw} 
example of globally hyperbolic space-times that are refocussing but not strongly 
refocussing at a point]\label{Kinlaw} 
Let $g$ be the standard metric on a unit sphere $S^m\subset \R^{m+1}.$ Then $(X_1, g^L_1)=(S^m\times (-\pi, \pi), g\oplus -dt^2)$
contains a codimension one submanifold $\Sigma=\{(x, 0)|x\in S^m\}$, such that $(X_1, g^L_1)$ is refocussing 
but not 
strongly refocussing at each point of $\Sigma.$  Note that $(X_1, g^L_1)$ is strongly refocussing 
at $(x, -\delta)$ for 
small $\delta>0$ and thus the globally hyperbolic manifold $(X_1, g^L_1)$ is strongly refocussing. 

It is easy to see that this example is neither spacelike, nor timelike, nor null geodesically complete.
However by~\cite[Lemma 9.17]{BeemEhrlichEasley} it is conformal equivalent to a globally  hyperbolic 
space-time $(X_2, g^L_2)$ 
that is null and timelike geodesically complete. By Remark~\ref{conformal} this $(X_2, g_2^L)$ also has a 
hypersurface formed by points such that $(X_2, g^L_2)$ is refocussing 
but not  strongly refocussing at these points. 
\end{example}

The following Theorem is close in spirit to our Corollary~\ref{setisclosed}.

\begin{theorem}[Kinlaw~\cite{Kinlaw}]
Let $(X^{m+1}, g^L)$ be a strongly causal space-time. Then the possibly 
empty set $\widetilde Z=\{z\in X|(X, g^L)\text{ is refocussing at } z\}$ is a closed subset of $X.$
\end{theorem}

Example~\ref{Kinlaw} shows that the set $Z=\{z\in X|(X, g^L)\text{ is strongly refocussing at } z\}$
does not have to be a closed subset of a strongly causal $(X^{m+1}, g^L).$

\section{Proof of Theorem~\ref{maintheorem}}\label{proofmaintheorem}
Let $(M, g)$ be a $\widetilde Y^x$-manifold for some point $x\in M$. Let $\pr:STM\to M$ denote the tangent unit sphere bundle of $M$. The fiber of $\pr$ over a point $y\in M$ is denoted by $ST_yM.$ For $v\in STM$, let $\gamma_v:\R\to STM$ be the unique 
unit speed geodesic with $\dot \gamma_v(0)=v.$ There are smooth maps 
\[
p\colon STM\times \R\to M,
\] 
\[
p\colon v\times \tau\mapsto \gamma_v(\tau)
\]
and 
\[
q\colon STM\times \R\to STM,
\]
\[
q\colon v\times \tau\mapsto \dot\gamma_v(\tau).
\]
 
We recall that there is a positive real number $\overline \epsilon$ such that the property in the definition of $\widetilde Y^x$-manifolds holds for all $\epsilon$ with $0<\epsilon<\overline \epsilon.$ Let $\{\epsilon_n\}_{n=1}^{\infty}$ be a sequence of positive numbers 
$\epsilon_n<\overline \epsilon$ with $\lim \epsilon_n =0.$
Since $(M,g)$ is a $\widetilde Y^x$-manifold, there exist sequences of positive real numbers 
$\{l_n\}_{n=1}^{\infty}$, $l_n>\epsilon_n$ and points $\{y_n\}_{n=1}^{\infty}$ such that 
\[
\Im (p\,|_{ST_{y_n}M\times l_n})\subset B(x, \epsilon_n),
\]
where $B(x, \epsilon_n)$ denotes the open ball in $M$ about $x$ of radius $\epsilon_n$. 

To begin with let us assume that the sequence $\{(y_n, l_n)\}_{n=1}^{\infty}$ of points in $M\times \R$ 
has no convergent subsequence. Then, as we show in Lemma~\ref{l:1}, the globally hyperbolic Lorentzian 
product manifold $(X, g^L)=(M\times \R, g\oplus -dt^2)$ 
is refocussing. By a result of Low, its Cauchy surface $M\times 0=M$ is a closed manifold, 
see~\cite{LowNullgeodesics, LowRefocussing}, 
\cite[Section 11, Proposition 6]{ChernovRudyak}. On the other hand, the result of Rudyak and the first author says that the 
fundamental group of the Cauchy surface $M\times 0$ has to be finite, see~\cite[Theorem 15]{ChernovRudyak}. This completes the proof of Theorem~\ref{maintheorem} under the assumption that the sequence $\{(y_n, l_n)\}$ has no convergent subsequence. 

Suppose now that the sequence $\{(y_n, l_n)\}$ has a convergent subsequence. Then, by Lemma~\ref{l:2} below, the manifold $(M, g)$ is a $Y^x_l$-manifold for some $l$. Hence, in this case, the conclusion of Theorem~\ref{maintheorem} immediately follows from Lemma~\ref{l:1} and the B\'erard-Bergery Theorem~\ref{BB1}. 

In the rest of the section we prove Lemmas~\ref{l:1} and \ref{l:2}.

\begin{lem}\label{l:1}
Suppose that the sequence $\{(y_n, l_n)\}$ does not have a convergent subsequence. Then the globally hyperbolic Lorentzian product manifold $(X, g^L)=(M\times \R, g\oplus -dt^2)$ 
is refocussing at $(x, 0)$. 
\end{lem}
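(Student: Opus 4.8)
The plan is to verify the definition of (weak) refocussing at $(x,0)$ directly, using the points $(y_n,-l_n)$ as the source points required by that definition; global hyperbolicity of the product is not at issue here (it follows from Useful Facts~\ref{usefulfacts}(d)), so the entire task is to produce the refocussing data. Throughout I would carefully avoid any appeal to compactness of $M$, since that is precisely the conclusion Lemma~\ref{l:1} is designed to feed into via Low's theorem.

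First I would record the structure of null geodesics of $(M\times\R, g\oplus -dt^2)$. Because the metric is a product, any geodesic has the form $\tau\mapsto(\sigma(\tau), a\tau+b)$ with $\sigma$ a geodesic of $(M,g)$; the null condition forces $|\dot\sigma|^2=a^2$, so necessarily $a\neq0$, and after an affine reparametrization normalizing $a=1$ the curve becomes $\tau\mapsto(\gamma_v(\tau), s+\tau)$ for a unique $v\in ST_yM$, where $(y,s)$ is the basepoint. Hence the inextendible null geodesics through a point $(y,s)$ are exactly indexed by $v\in ST_yM$. Applying this at $(y,s)=(y_n,-l_n)$, the geodesic in direction $v$ reaches $(\gamma_v(l_n),0)$ at the parameter value $\tau=l_n>0$, and the defining inclusion $\Im(p\,|_{ST_{y_n}M\times l_n})\subset B(x,\epsilon_n)$ gives $\gamma_v(l_n)\in B(x,\epsilon_n)$ for every $v$. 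Thus every null geodesic through $(y_n,-l_n)$ meets the slice $B(x,\epsilon_n)\times\{0\}$. This is the geometric core of the argument, and it uses no compactness.

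To finish I would check the refocussing definition. Take $U=B(x,\rho)\times(-1,1)$ for some fixed $\rho>0$; completeness of $(M,g)$ together with Hopf--Rinow makes $\overline{B(x,\rho)}$, and hence $\overline U$, compact. Now let $V$ be any open set with $(x,0)\in V\subset U$. Openness of $V$ yields $\delta>0$ with $B(x,\delta)\times\{0\}\subset V$. Since $\{(y_n,l_n)\}$ has no convergent subsequence it eventually leaves every compact set, and the same holds for $\{(y_n,-l_n)\}$ because reflection in the $\R$-factor is a homeomorphism; therefore $(y_n,-l_n)\notin\overline U$ for all large $n$. Choosing $n$ large enough that simultaneously $\epsilon_n<\delta$ and $(y_n,-l_n)\notin\overline U$, the point $w=(y_n,-l_n)$ lies outside $U\supset V$, while all null geodesics through $w$ pass through $B(x,\epsilon_n)\times\{0\}\subset V$. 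This exhibits the required $y\notin V$, so $(X,g^L)$ is refocussing at $(x,0)$.

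I expect the main difficulty to be organizational rather than geometric. One must commit to a \emph{relatively compact} $U$ at the outset, so that the hypothesis ``no convergent subsequence'' forces $w\notin U$ uniformly over all admissible $V$ at once, rather than trying to arrange $w\notin V$ separately for each $V$; and one must resist invoking compactness of $M$, which is not yet available. The role of completeness, entering only through Hopf--Rinow to guarantee compact closed balls, is the small point that ties the bookkeeping together.
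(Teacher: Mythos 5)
Your proof is correct and takes essentially the same route as the paper's: both use the product structure of null geodesics to show that every null geodesic through $(y_n,-l_n)$ meets $B(x,\epsilon_n)\times\{0\}$, and both choose $n$ large so that $\epsilon_n$ is small and $(y_n,\pm l_n)$ has escaped a fixed relatively compact neighborhood $U$ of $(x,0)$ fixed in advance. The only differences are cosmetic: the paper takes $U$ to be a metric ball for the product Riemannian metric $g\oplus dt^2$ where you take a product of balls, and you spell out the Hopf--Rinow/compact-closure step that the paper leaves implicit.
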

\begin{proof}
Put $g^R=g\oplus dt^2$ to be the product Riemannian metric on 
$M\times \R$ and put 
\[
U=B\bigl( (x, 0), \overline \epsilon\bigr)=\{(y, t)\in M\times \R\,|\, 
d_{g^R}\bigl ((y,t), (x,0)\bigr)<\overline \epsilon \}
\] 
to be the open ball neighborhood of $(x,0)\in M\times \R$ of radius $\overline\epsilon.$  Let $V\subset U$ be any neighborhood of $(x, 0)$. Put 
\[
\widetilde V=\{y\in M| y\times 0\in V\}
\] 
to be the open neighborhood of $x$ and put $\epsilon>0$ with $\overline \epsilon> \epsilon$ to be such 
that 
\[
B(x, \epsilon)=\{y|d_g(x,y)<\epsilon\}\subset \widetilde V.
\]

Since the sequence $\{(y_n, l_n) \}$ does not have a convergent subsequence 
and 
\[
\lim_{n\to \infty}\epsilon_n=0,
\] 
there exists 
a positive integer $N$ such that $\epsilon_N<\epsilon $ and $(y_N, l_N)\not \in U.$ 
If $w\in T_{(y, \tau)}(M\times \R)$ is a null vector with components 

\[
(w_M, w_{\R})\in T_yM\oplus T_{\tau}\R=T_{(y, \tau)}(M\times \R),
\] 
then $g(w_M, w_M)=-w_{\R}\cdot w_{\R}$, 
where $\cdot$ is the standard Riemannian metric on $\R^1.$ Since $g\oplus -dt^2$ is a Lorentzian product metric, the geodesics in $(M\times \R, g\oplus -dt^2)$ should project 
to geodesics in $(M,g)$ and in $(\R, -dt^2)$. 
Thus all the null geodesics through $(y_{N}, -l_{N})\not \in V$ intersect $\wt V\times 0\subset V$ 
and hence they all intersect $V.$ Hence $(M\times\R, g\oplus-dt^2)$ is refocussing.

\end{proof}

\begin{lem}\label{l:2}
Suppose that the sequence of points 
\[
\{(y_n, l_n)\}_{n=1}^{\infty}\subset M\times \R
\] 
contains a subsequence $\{(y_{n_k}, l_{n_k})\}_{k=1}^{\infty}$ converging to a point $(\wt y, \wt l).$ Then $\widetilde l\neq 0$, and $(M,g)$ is a $Y^x_{2\wt l}$-manifold.
\end{lem}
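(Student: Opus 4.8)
The plan is to establish the two assertions separately: first that $\wt l\neq 0$, and then that every unit speed geodesic issuing from $x$ returns to $x$ at time $2\wt l$. The engine for both is to pass to the limit in the focusing condition
\[
\Im\bigl(p\,|_{ST_{y_{n_k}}M\times l_{n_k}}\bigr)\subset B(x,\epsilon_{n_k}),\qquad \epsilon_{n_k}\to 0,
\]
exploiting that $p$ and $q$ are continuous, that the fibers $ST_yM$ are compact, and that $v\mapsto -v$ is an involution of each fiber.

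First I would rule out $\wt l=0$. Since $l_n>\epsilon_n>0$ we have $\wt l\geq 0$, so suppose $\wt l=0$. Fix a unit vector $v\in ST_{y_{n_k}}M$; because $-v$ also lies in $ST_{y_{n_k}}M$ and $\gamma_{-v}(t)=\gamma_v(-t)$, both endpoints $\gamma_v(\pm l_{n_k})$ lie in $B(x,\epsilon_{n_k})$, whence $d_g(\gamma_v(l_{n_k}),\gamma_v(-l_{n_k}))<2\epsilon_{n_k}$. On the other hand $y_{n_k}\to\wt y$, so the $y_{n_k}$ eventually lie in a fixed compact set on which the convexity radius is bounded below by some $\delta>0$; once $2l_{n_k}<\delta$ the arc $\gamma_v|_{[-l_{n_k},l_{n_k}]}$ is the unique minimizing geodesic between its endpoints, so $d_g(\gamma_v(l_{n_k}),\gamma_v(-l_{n_k}))=2l_{n_k}$. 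Combining the two gives $l_{n_k}<\epsilon_{n_k}$, contradicting $l_{n_k}>\epsilon_{n_k}$. Hence $\wt l>0$.

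Next I would take the limit to show that \emph{every} unit speed geodesic from $\wt y$ passes through $x$ at times $+\wt l$ and $-\wt l$. Given $\wt v\in ST_{\wt y}M$, choose $v_k\in ST_{y_{n_k}}M$ with $v_k\to\wt v$ (possible since $STM\to M$ is a fiber bundle and $y_{n_k}\to\wt y$); continuity of $p$ gives $p(v_k,l_{n_k})\to p(\wt v,\wt l)=\gamma_{\wt v}(\wt l)$, while $d_g(p(v_k,l_{n_k}),x)<\epsilon_{n_k}\to 0$ forces $\gamma_{\wt v}(\wt l)=x$; applying this to $-\wt v$ yields $\gamma_{\wt v}(-\wt l)=x$. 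Now the time-$\wt l$ geodesic flow $q(\cdot,\wt l)\colon STM\to STM$ is a diffeomorphism (the metric is complete) and it carries $ST_{\wt y}M$ into $ST_xM$ because $p(\wt v,\wt l)=x$. Its restriction to $ST_{\wt y}M$ is therefore an injective immersion into $ST_xM$, and since $\dim ST_{\wt y}M=\dim ST_xM=m-1$ it is a local diffeomorphism; its image is thus open in $ST_xM$, is compact hence closed, and $ST_xM\cong S^{m-1}$ is connected because $m=\dim M\geq 2$, so $\wt v\mapsto\dot\gamma_{\wt v}(\wt l)$ is onto $ST_xM$. Finally, given an arbitrary $u\in ST_xM$, surjectivity provides $\wt v$ with $\dot\gamma_{\wt v}(\wt l)=-u$; the geodesic $s\mapsto\gamma_{\wt v}(\wt l+s)=\exp_x(-su)$ passes through $\gamma_{\wt v}(-\wt l)=x$ at $s=-2\wt l$, i.e. $\exp_x(2\wt l\,u)=x$. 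As $u$ was arbitrary, $(M,g)$ is a $Y^x_{2\wt l}$-manifold.

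The step I expect to require the most care is the surjectivity of $\wt v\mapsto\dot\gamma_{\wt v}(\wt l)$ onto $ST_xM$: the limiting focusing condition only directly produces geodesic loops at $x$ in the directions pointing from $x$ toward $\wt y$, and promoting this to \emph{all} directions is exactly where the diffeomorphism property of the geodesic flow and the connectedness of the unit sphere (hence the hypothesis $m\geq 2$) enter. A secondary point needing attention is the uniform lower bound on the convexity radius used to rule out $\wt l=0$, which is legitimate precisely because the $y_{n_k}$ accumulate at $\wt y$ and so eventually lie in a single compact set.
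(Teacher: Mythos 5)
Your proof is correct and follows essentially the same route as the paper's: pass to the limit in the focusing condition using continuity of $p$ to get $\gamma_{\wt v}(\pm\wt l)=x$ for all $\wt v\in ST_{\wt y}M$, then use that $q(\cdot,\wt l)$ restricts to an embedding $ST_{\wt y}M\to ST_xM$ between spheres of equal dimension (surjective since $m\geq 2$) to promote the conclusion to all directions at $x$. Your only departures are cosmetic: you spell out the convexity-radius argument for $\wt l\neq 0$ (which the paper compresses into the assertion that each $l_n$ exceeds the injectivity radius at $x$) and you pick arbitrary $v_k\to\wt v$ where the paper uses parallel transport.
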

\begin{proof}
Put $r>0$ to be such that the exponential map $\exp_x:T_xM\to M$ restricted to the radius $r$ ball centered 
at ${\bf 0}\in T_xM$ is a diffeomorphism. Then each $l_n>r$ and thus the limit value $\widetilde l$ 
is non-zero. 
 
Without loss of generality we can assume that each point $y_{n_k}$ belongs to a prescribed geodesically convex neighborhood $W$ of $\wt y.$ 
For $v\in ST_{\wt y}M$ let $v_{n_k}\in ST_{y_{n_k}}M$ denote the vector obtained by the parallel transport of $v$ along the unique geodesic in $W$ connecting $\wt y$ to $y_{n_k}.$ Then 
\[
\lim_{k\to \infty}(v_{n_k}, l_{n_k})=(v, \widetilde l).
\]
Since the map $p$ is continuous, its values $p(v_{n_k}, l_{n_k})$ converge to $p(v, \wt l)$ as $k\to \infty$. On the other hand, each point $p(v_{n_k}, l_{n_k})$ is $\epsilon_{n_k}$-close to the point $x$. In view of the convergence $\epsilon_{n_k}\to 0$, we conclude that $p(v, \widetilde l)=x$. Consequently,
\[
\Im p\,|_{ST_{\wt y}M\times \wt l}=x,
\]
i.e., $\gamma(\widetilde l)=x$ for each geodesic $\gamma$ emitted from $\wt y$ with $|\dot\gamma(0)|=1$. Thus 
\[
q|_{ST_{\wt y}M\times \wt l}:ST_{\wt y}M=S^{m-1}\to ST_{x}M=S^{m-1}
\] 
is a smooth embedding and hence a diffeomorphism for dimensional reasons. Consequently, 
\[
\Im p\,|_{ST_{x}M\times \wt l}=\wt y.
\]  
This implies that $(M,g)$ is a $Y^x_{2\wt l}$-manifold. 
\end{proof}

\begin{remark} As it has been explained in Example~\ref{examplerefocussing}, 
one deduces that the globally hyperbolic Lorentzian product manifold $(X, g^L)=(M\times \R, g\oplus -dt^2)$ is refocussing at $(x, 0)$ (and hence at each $(x, t)$ with $t\in \R$ 
for the reason of symmetry).
\end{remark}

\begin{remark} Theorem~\ref{maintheorem} can be also proved so that its proof is independent of the B\'erard-Bergery Theorem, which we used in the proof of Theorem~\ref{maintheorem} under the hypothesis of Lemma~\ref{l:2}. Indeed, suppose that the sequence of points $\{(y_n, l_n)\}$ contains a sub-sequence converging to a point $(\widetilde y, \widetilde l)$. Since the limit value $\widetilde l$ is different from zero, we may still apply the argument of Lemma~\ref{l:1} to complete the proof of Theorem~\ref{maintheorem}. On the other hand, the statement of Lemma~\ref{l:2} is somewhat stronger than what we can deduce using the argument in Lemma~\ref{l:1} as it asserts that all geodesics emitted from $\widetilde y$ return precisely to the point $\widetilde y$ at the moment $2\widetilde l$.   
\end{remark}

\section{Intriguing facts related to $Y^x$- and $\widetilde Y^x$-Riemannian manifolds, 
refocussing space-times, positive Legendrian isotopy and open questions}\label{quest}
\subsection{$Y^x_l$-Riemannian manifolds, 
causality in space-times, Low Conjecture and the Legendrian Low Conjecture}\label{LowConjecture}
Low Conjecture~\cite{Low0}, \cite{Low1}, \cite{Low3},~\cite{LowLegendrian} and the 
Legendrian Low conjecture due to Natario and Tod~\cite{NatarioTod} reformulate causality in a globally 
hyperbolic space-time $(X^{m+1},g^L)$ in terms of link theory.  
Basically they ask if it is true that when the Cauchy surface is diffeomorphic to an open subset of 
$\R^2$ or of $\R^3$, 
then two events $x,y\in X$ are causally 
related if and only if the spheres of null geodesics passing 
through $x$ and $y$ are linked (in the appropriate sense) in the contact manifold  of all 
non-parameterized future pointing null geodesics in $(X^{m+1},g)$. 
This motivated a problem  communicated by Penrose on Arnold's problem lists~\cite[Problem~8]{ArnoldProblem},
\cite[Problem 1998-21]{ArnoldProblemBook}.

Stefan Nemirovski and the first author~\cite[Theorem A, Theorem C]{ChernovNemirovski} proved the 
Low and the Legendrian Low conjectures. They also 
proved~\cite[Theorem 10.4]{ChernovNemirovskiSecondPaper} 
that the statements of these conjectures remain true for all 
globally hyperbolic space-times $(X^{m+1}, g^L), m>1$ such 
that the total space of the universal cover of its Cauchy 
surface $M^m$ is an open manifold. 

If $(M,g)$ is a $Y^x_l$ Riemannian manifold, then 
these conjectures are false in the strongly refocussing $(M\times \R, g\oplus -dt^2)$, 
see~\cite[Example 10.5]{ChernovNemirovskiSecondPaper}.
In the physically most interesting case of  a 
$(3+1)$-dimensional globally hyperbolic space-time $(X^{3+1}, g^L)$ 
we get that if the Legendrian Low conjecture fails for $(X^{3+1}, g^L)$, then 
the Cauchy surface of $X$ admits a Riemannian metric making it into a $Y^x_l$-manifold, 
see~\cite[page 1322]{ChernovNemirovski}.

\subsection{Topology of a refocussing globally hyperbolic space-time}\label{topology}
An interesting question is what should be the topology of a Cauchy surface $M$ of a refocussing 
globally hyperbolic $(X^{m+1}, g^L)$. Low~\cite{LowNullgeodesics, LowRefocussing} 
proved that $M$ has to be a closed 
manifold, see also~\cite[Section 11, Proposition 6]{ChernovRudyak}. Rudyak and the first author 
proved that 
the universal Lorentzian cover of a refocussing globally hyperbolic space-time 
$(X^{m+1}, g^L), m>1$ is a refocussing 
globally hyperbolic space-time, see~\cite[Theorem 14]{ChernovRudyak}. Thus $|\pi_1(M)|<\infty.$

It is interesting to know if the third implication 
of the B\'erard-Bergery Theorem holds for a Cauchy surface $M$ of a globally hyperbolic refocussing space-time
$(X^{m+1}, g), m>1$, i.e.,~is it true that the ring $H^*(M^m , \Q)$ is generated by one element? This is true 
for $\dim M=2,3$. Indeed Fact~\ref{cooltechnical} says that such $M$ admits a Riemannian metric $g_q$ making 
$(M, g_q)$ into a $Y^x_{2\pi}$-manifold. Now B\'erard-Bergery Theorem~\cite{BerardBergery}, 
\cite[Theorem 7.37, page 192]{Besse} says that the ring $H^*(M, \Q)$
is generated by one element.

Paul Kinlaw~\cite{Kinlaw} 
noticed that this discussion implies the following intriguing observation: 
if a globally hyperbolic 
space-time $(X^{m+1}, g^L), m=2,3$ is refocussing, 
then it admits a globally hyperbolic Lorentzian metric $\widetilde g^L=g_q\oplus -dt^2$ such 
that $(X^{m+1}, \widetilde g^L)$ is strongly refocussing.

We do not know examples of globally hyperbolic space-times that are refocussing but 
not strongly refocussing. However Example~\ref{Kinlaw} shows that the situation is quite 
nontrivial.

\subsection{$\widetilde Y^x$- and $Y^x$-manifolds and positive Legendrian isotopy} 
Since $(M,g)$ is a Riemannian manifold we have the natural identification $STM=ST^*M.$ The 
spherical cotangent bundle $ST^*M$ has a natural contact structure and the $S^{m-1}$-fiber $S_x$ of 
$ST^*M\to M$ over a point $x\in M$ is a Legendrian submanifold. 
For each $t$ the map $q|_{ST^*M\times t}:ST^*M\to ST^*M$ preserves the contact structure and hence it maps 
Legendrian submanifolds to Legendrian submanifolds. Moreover the map $\phi:S_x\times[0,\infty)\to ST^*M$
defined by $\phi(z,t)=q(z,t)$ is a positive Legendrian isotopy, i.e. it is a Legendrian isotopy such that 
the evaluation of the contact form on the velocity vectors of the trajectory curves 
$\phi_z(t)=\phi(z,t):[0,\infty)\to ST^*M$, $z\in S_x$ is positive. 

If $(M,g)$ is a $Y^x_l$-manifold, then $\phi:S_x\times [0,l]\to ST^*M$ is a positive Legendrian isotopy of 
the fiber $S_x$ to itself. If a Cauchy surface $M^m, m>1$ of a globally hyperbolic space-time 
$(X^{m+1}, g)$ is such that there is no positive Legendrian isotopy of an $S^{m-1}$-fiber of $ST^*M$ to 
itself, then the Legendrian Low conjecture holds for $(X^{m+1}, g)$, 
see~\cite[Section 7]{ChernovNemirovski} and \cite[proof of Theorem 10.4]{ChernovNemirovskiSecondPaper}.
In~\cite[Corollary 8.1]{ChernovNemirovskiSecondPaper} 
Nemirovski and the first author 
proved that if $ST^*M$ admits a positive Legendrian isotopy of $S_x$ to itself, then $M$ is compact
and has finite $\pi_1(M)$, i.e.~the universal cover of $M$ is compact.  
In particular this gives yet another proof of the first two statements of 
B\'erard-Bergery Theorem~\ref{BB1}. A question in~\cite[Example 8.3]{ChernovNemirovskiSecondPaper} asks 
whether the existence of a positive Legendrian isotopy of $S_x$ to $S_x$
implies that the rational cohomology ring $H^*(M, \Q)$ is generated by one element.

It may be that the result of~\cite{ChernovNemirovskiSecondPaper} can be 
strengthened to show that if the universal cover of $M$ is not compact, then 
given two not necessarily distinct points $x,y\in M$ and a sufficiently small neighborhood 
$U$ of $S_x$ there is no positive Legendrian isotopy $\phi:S_y\times [0,1]\to ST^*M$ such that 
$\Im(\phi(S_y\times \frac{1}{2}))\cap U=\emptyset$ and $\Im(\phi(S_y\times 1))\subset U.$
If such a result holds it would give another proof of our Theorem~\ref{maintheorem}. 
One can also ask the question whether 
the existence of such a positive Legendrian isotopy implies that the ring $H^*(M,\Q)$ is generated by one element.

If $(M,g), m=2,3$ is a $\widetilde Y^x$- or a $Y^x$-manifold then 
the ring $H^*(M,\Q)$ is generated by one element, 
see~Corollary~\ref{maincorollary}. So one can ask if it holds 
in all dimensions. This question does not seem to be immediately reducible to the 
question in~\cite[Example 8.3]{ChernovNemirovskiSecondPaper}. 
 
Indeed given a $Y^x$-manifold $(M,g)$ and a sequence $\{\epsilon_n\}_{n=1}^{\infty}$ 
of sufficiently small positive numbers converging to zero, 
put $\{l_n\}_{n=1}^{\infty},$ $l_n>\epsilon_n$ to be a sequence 
as in Definition~\ref{def:Yx}. Put $B(x,\epsilon_n)$ be the metric 
ball of radius $\epsilon_n$-centered at $x.$ Clearly $\Im p|_{S_x\times l_n}$ is a Legendrian submanifold of 
$ST^*B(x,\epsilon_n)\subset ST^*M$ that can be obtained from $S_x$ by a positive Legendrian isotopy 
within $ST^*M.$
However it is not clear if this 
submanifold can be deformed to $S_x$ by a positive Legendrian isotopy inside $ST^*B(x,\epsilon_n)$ 
or even inside $ST^*M$. A similar difficulty arises for $\widetilde Y^x$-manifolds.

{\bf Acknowledgments.} 
The first author is very thankful to Stefan Nemirovski and Robert Low 
for the very useful mathematical discussions.

\end{document}